\numberwithin{equation}{section}
\theoremstyle{plain}
\newtheorem{theorem}{Theorem}
\newtheorem{lemma}{Lemma}
\newtheorem{corollary}{Corollary}
\newtheorem{proposition}{Proposition}
\newtheorem*{theorem*}{Theorem}
\newtheorem*{lemma*}{Lemma}
\newtheorem*{corollary*}{Corollary}
\newtheorem*{proposition*}{Proposition}
\theoremstyle{definition}
\newtheorem{example}{Example}
\newtheorem{definition}{Definition}
\crefname{assumption}{\textbf{assumption}}{assumptions}
\Crefname{assumption}{Assumption}{Assumptions}
\crefname{equation}{}{}
\Crefname{equation}{}{}
\DeclarePairedDelimiter\evaluated{.}{\rvert}
\reDeclarePairedDelimiterInnerWrapper\evaluated{nostarnonscaled}{%
    \mathopen{}#2\mathclose{#3}%
}
\reDeclarePairedDelimiterInnerWrapper\evaluated{star}{%
    \mathopen{}\mathclose\bgroup #1\hskip -\nulldelimiterspace \relax
    #2\aftergroup\egroup #3%
}
\newcommand{\ConvexIndicator}[1]{\iota_{#1}}
\DeclarePairedDelimiterX\event[1]{\{}{\}}{#1}
\renewcommand{\Pr}{\mathbb{P}}
\DeclareMathOperator{\trace}{trace}
\DeclareMathOperator{\conv}{conv}
\DeclareMathOperator{\effdom}{dom}
\newcommand{\argmin}{\operatornamewithlimits{arg\,min}}
\DeclareMathOperator{\E}{\mathbb{E}}
\DeclareMathOperator{\diag}{diag}
\DeclarePairedDelimiterX\innerp[2]{\langle}{\rangle}{#1,#2}
\providecommand\given{\,|\,}
\newcommand\SetSymbol[1][]{%
  \nonscript\:#1\vert
  \allowbreak
  \nonscript\:
  \mathopen{}}
\DeclarePairedDelimiterX\set[1]\{\}{%
  \renewcommand\given{\SetSymbol[\delimsize]}
  #1 
}
\DeclarePairedDelimiterX\parens[1]\lparen\rparen{%
  \renewcommand\given{\SetSymbol[\delimsize]}
  #1   
}
\DeclarePairedDelimiterX\bracks[1]\lbrack\rbrack{%
  \renewcommand\given{\SetSymbol[\delimsize]}
  #1   
}
\DeclarePairedDelimiterX\braces[1]\lbrace\rbrace{%
  \renewcommand\given{\SetSymbol[\delimsize]}
  #1   
}
\DeclarePairedDelimiter\abs{\lvert}{\rvert}
\DeclarePairedDelimiterX\norm[1]\lVert\rVert{\ifblank{#1}{\:\cdot\:}{#1}}
\newcommand{\opnorm}{\@ifstar\@opnorms\@opnorm}
\newcommand{\@opnorms}[1]{%
  \left|\mkern-1.5mu\left|\mkern-1.5mu\left|
   #1
  \right|\mkern-1.5mu\right|\mkern-1.5mu\right|
}
\newcommand{\@opnorm}[2][]{%
  \mathopen{#1|\mkern-1.5mu#1|\mkern-1.5mu#1|}
  #2
  \mathclose{#1|\mkern-1.5mu#1|\mkern-1.5mu#1|}
}
\DeclareMathOperator{\sign}{sign}
\newcommand{\Real}{\mathbb{R}}
\newcommand{\id}[1][]{%
  \ifstrempty{#1}{I}{I_{#1}}%
}
\newcommand{\one}[1][]{%
  \ifblank{#1}{\mathbf{1}}{\mathbf{1}_{#1}}%
}
\newcommand{\Cut}{\mathrm{Cut}}
\newcommand{\Sym}{\mathrm{Sym}}
\newcommand{\X}{\mathcal{X}}
\newcommand{\Orthogonal}{\mathcal{O}}
\newcommand{\Group}{\mathcal{G}}
\DeclareMathOperator{\SLC}{SLC}
\DeclareMathOperator{\SLT}{SLT}
\crefname{definition}{\textbf{definition}}{definitions}
\Crefname{definition}{Definition}{Definitions}
\crefname{assumption}{\textbf{assumption}}{assumptions}
\Crefname{assumption}{Assumption}{Assumptions}
\begin{document}

\title{Group Invariance and Computational Sufficiency}
\author{Vincent Q. Vu\\Department of Statistics\\The Ohio State University}

\maketitle

\begin{abstract}
Statistical sufficiency formalizes the notion of data reduction. In the decision theoretic interpretation, once a model is chosen all inferences should be based on a sufficient statistic.  However, suppose we start with a set of procedures 
rather than a specific model. Is it possible to reduce the data and yet still be able to compute all of the procedures?  
In other words, what functions of the data contain all of the information sufficient for computing these procedures?  This article presents some progress towards a theory of ``computational sufficiency'' and shows that strong reductions can be made for large classes of penalized $M$-estimators by exploiting hidden symmetries in the underlying optimization problems.  These reductions can (1) reveal hidden connections between seemingly disparate methods, (2) enable efficient computation, (3) give a different perspective on understanding procedures in a model-free 
setting.  As a main example, the theory provides a surprising answer to the following question: ``What do the Graphical Lasso, sparse PCA, single-linkage clustering, and L1 penalized Ising model selection all have in common?'' \end{abstract}

\section{Introduction}
\label{sec:introduction}
The extraction of information and the reduction of data are central concerns 
of statistics. One formalization of these notions is the concept of 
statistical sufficiency introduced by \citet{fisher:on} in his seminal 
article ``On the Mathematical Foundations of Theoretical Statistics'':
\begin{quote}
``A statistic satisfies the criterion of sufficiency when no other statistic 
which can be calculated from the same sample provides any additional 
information as to the value of the parameter to be estimated.''
\end{quote}
Implicit in Fisher's definition is the specification of a statistical model  
and the sense in which a sufficient statistic ``contains all of the 
information in the sample.'' In the decision theoretic interpretation, once 
a model is specified all inferences should (or might as well) be based on a 
sufficient statistic---for any procedure based on the data there is an 
equivalent randomized procedure based on a sufficient statistic 
\citep[see, e.g.,][Section 10]{halmos.savage:application}. However, 
actual data analysis does not always begin with the specification of a 
model, and it may not even make explicit use of a statistical model. 
\citet{breiman:statistical} famously described two cultural perspectives 
on data analysis:
\begin{quote}
``One assumes that the data are generated by a given stochastic data model. 
The other uses algorithmic models and treats the data mechanism as unknown.''
\end{quote}
In the former case, the statistical model gives context to ``information'' 
and statistical sufficiency can be seen as a criterion for separating the 
``relevant information'' from the ``irrelevant information.'' In the latter 
case, a statistical model is absent and statistical sufficiency is of no 
use. Rather than positing a collection of probability distributions (a statistical model), the 
data analyst might instead consider a collection of procedures or algorithms. 
So how should we formalize data reduction and what is the proper context for 
defining ``relevant information'' from an algorithmic perspective? This 
article proposes a concept called \emph{computational sufficiency}.

Computational sufficiency defines information in the context of a collection 
of procedures that share a common input domain. It is motivated in part by 
the following questions.
\begin{enumerate}
    \item Are there hidden commonalities between the procedures?
    \item Are there parts of the data that are irrelevant to all of the procedures?
    \item Can we reduce the data by removing the irrelevant parts?
    \item Can we exploit this reduction for computation?
    \item What is the most relevant core of the data?
\end{enumerate}
Precise definitions will be given in \Cref{sec:sufficiency}, but the 
basic idea is simple: a statistic (or reduction) is computationally 
sufficient if every procedure in the collection is essentially a function of 
the statistic. The data itself is computationally sufficient, because every 
procedure is already a function of the data. So the definition is only 
really useful if there are nontrivial reductions. The main point of this 
article is to show that nontrivial reductions do exist for large classes of 
procedures, and that by studying reductions within the framework of 
computational sufficiency, interesting and insightful answers can be made to 
the above questions. This provides a different perspective on understanding 
data analysis procedures when a statistical model may not be present.

The article proceeds in a manner roughly paralleling the author's own 
process of discovery. \Cref{sec:example} presents the main motivating 
example, where a commonality between three seemingly disparate methods is 
demonstrated empirically on a real dataset.  The connection between two of 
those is already known and discussed in \Cref{sec:exact_thresholding}, but 
what is surprising (at least to me) is that the 
phenomenon generalizes to a large classes of procedures. 
\Cref{sec:sufficiency} gives precise definitions for computational 
sufficiency and related concepts, and attempts to explain the parallels and 
differences with statistical sufficiency. The section also begins the main 
arc of the paper, which is a theoretical framework for the construction of 
computationally sufficient reductions.  This includes defining a class of 
procedures that generalize penalized maximum likelihood for exponential 
families (\Cref{sec:framework}). Within this class, the primary mathematical 
tool for finding commonalities is the exploitation of symmetries via group 
invariance (\Cref{sec:group_invariance}). This allows us, in 
\Cref{sec:reduction}, to construct nontrivial reductions that are 
computationally sufficient, and to return to the main example in 
\Cref{sec:single_linkage_symmetry} with deeper insight. Additional 
extensions and discussion are given in \Cref{sec:discussion}. 
\section{A motivating example}
\label{sec:example}
\begin{figure}[tbp]
\centering
\includegraphics[width=\columnwidth]{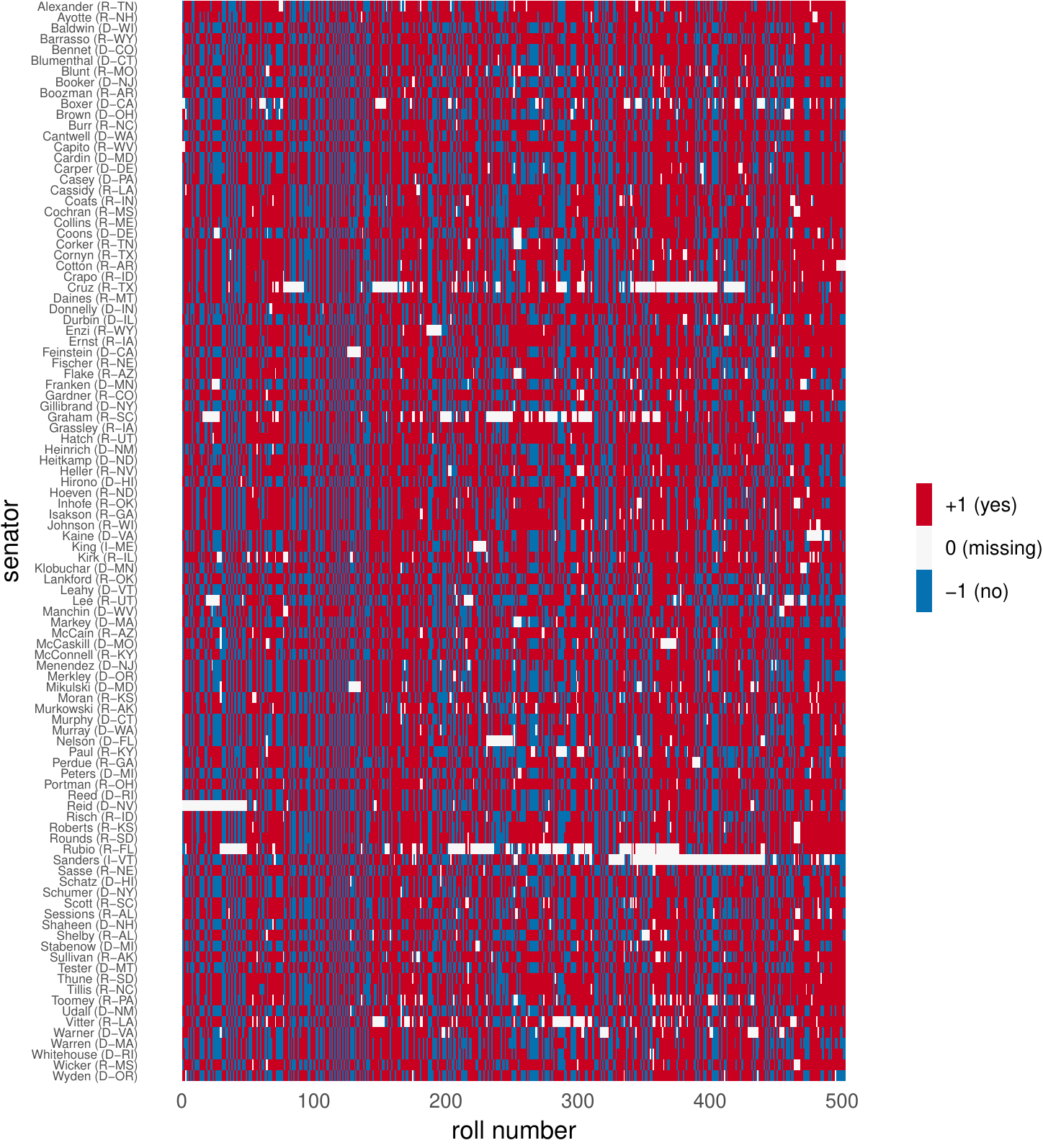}
\caption{
Senate roll call votes from the 114\textsuperscript{th} U.S. Congress. Each row is the 
voting record of a senator, arranged from earliest (left) to latest (right). 
Horizontal white stripes indicate consecutively missed votes.  The longest 
of these correspond to senators who campaigned for the 2016 U.S. 
Presidential election.
}
\label{fig:rollcall}
\end{figure}
Political polarization is a defining feature of 21\textsuperscript{st} century American
politics \citep{kohut.doherty.ea:partisan}. One manifestation of this is
in the clustering of voting patterns of political representatives in the
United States government. \Cref{fig:rollcall} displays $n=502$ senate roll
call votes from the 114\textsuperscript{th} United States Congress (January, 2015 --
January, 2017) for each of the $p=100$ senators.\footnote{The data were
collected by \citet{lewis.poole.ea:voteview} and imported with the
\texttt{Rvoteview} R package \citep{lewis:rvoteview}.} The votes are 
coded numerically as $+1$ for ``yes'', $-1$ for ``no'', and $0$ if the vote 
was missed.

\begin{figure}[t]
\centering
\begin{subfigure}[t]{0.5\columnwidth}
\centering
\includegraphics[width=0.9\columnwidth]{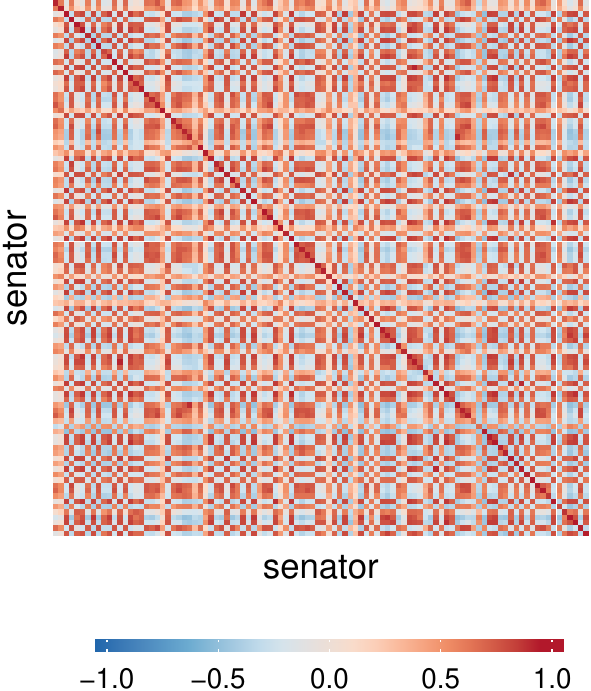}
\caption{Sorted by name}
\label{fig:sample_covariance_name}
\end{subfigure}%
~
\begin{subfigure}[t]{0.5\columnwidth}
\centering
\includegraphics[width=0.9\columnwidth]{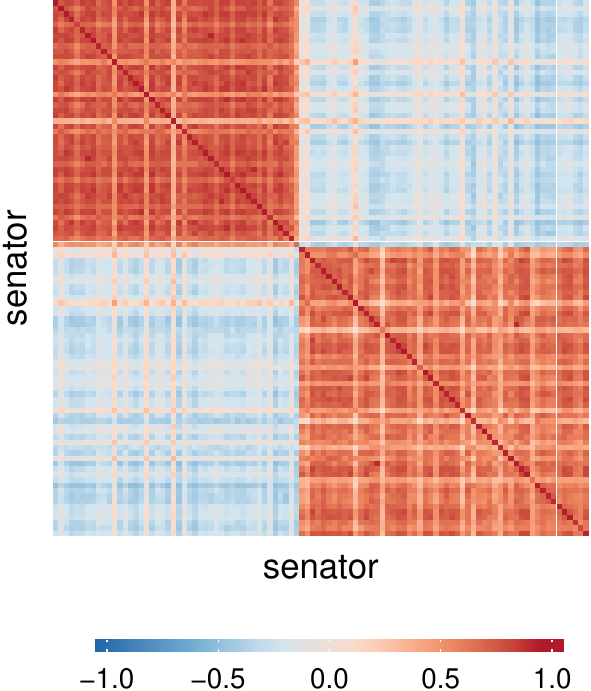}
\caption{Sorted by political party}
\label{fig:sample_covariance_party}
\end{subfigure}
\caption{
The sample covariance matrix of senators' roll call votes. Positive entries 
indicate relative agreement, while negative entries indicate relative 
disagreement. (\subref{fig:sample_covariance_name}) Senators sorted by name.
(\subref{fig:sample_covariance_party})  Senators sorted by political party 
affiliation (Democrat, Independent, or Republican) with ties broken by name. 
The two diagonal blocks correspond to Democrats (upper-left) and Republicans 
(lower-right).  Two Independent senators are placed in the middle at positions 45--46; they essentially vote with the Democrats.
}
\label{fig:sample_covariance}
\end{figure}
Relative agreement or disagreement between voting patterns of pairs of 
senators can be summarized by taking the average of the product of the 
entries of their corresponding vectors, i.e. we form a matrix 
$X \in \Real^{p \times p}$ with entries
\begin{equation}
    \label{eq:sample_covariance}
    x_{ij} 
    = \frac{1}{n} \sum_{t=1}^n v_{ti} v_{tj} 
    = \frac{(\text{\# of agreements}) - (\text{\# of disagreements})}{n}
    \,,
\end{equation}
where $v_{ti}$ is the vote of senator $i$ on roll call $t$. This can be 
viewed as an uncentered sample covariance: it is positive when the pair of 
senators tend to vote together, and it is negative when they tend to vote 
against each other. The resulting matrix $X$ is displayed in 
\Cref{fig:sample_covariance}. There is no easily discernible pattern when the 
senators are sorted alphabetically by name 
(\Cref{fig:sample_covariance_name}), 
but when sorted by political party affiliation---all but 2 senators are 
affiliated with  either the Republican Party or Democrat Party---a clear 
pattern emerges: the voting pattern of the senators appears to cluster 
according to political party (\Cref{fig:sample_covariance_party}).

\subsection{Single-linkage cluster analysis}
\label{sec:single_linkage_cluster_analysis}
\begin{figure}[tp]
\centering
\begin{subfigure}[t]{0.5\columnwidth}
\centering
\includegraphics[width=0.95\columnwidth]{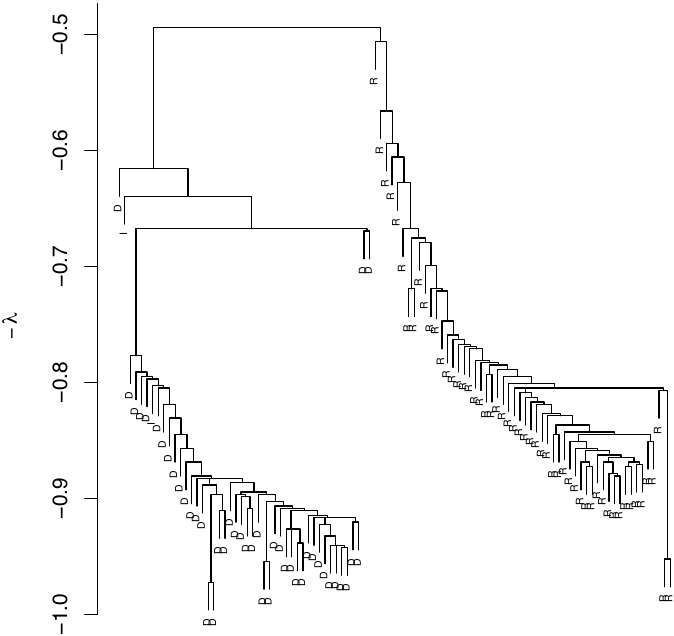}
\caption{Single-linkage dendrogram}
\label{fig:dendrogram}
\end{subfigure}%
~
\begin{subfigure}[t]{0.5\columnwidth}
\centering
\includegraphics[width=0.9\columnwidth]{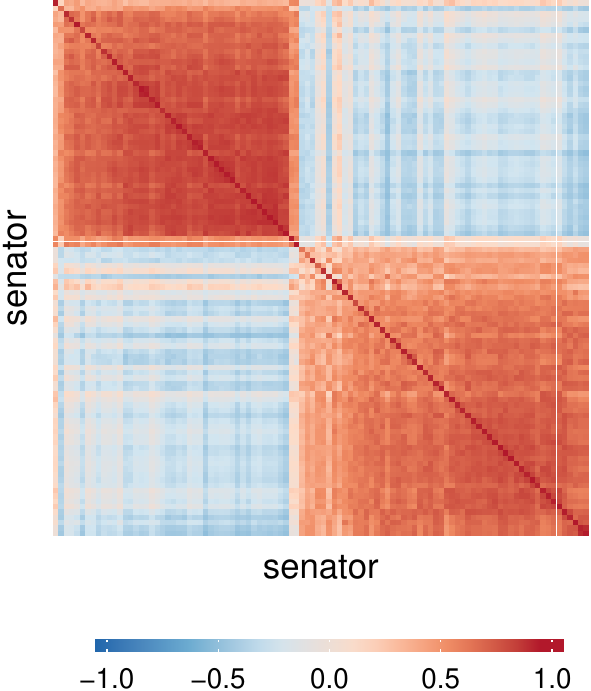}
\caption{Sample covariance matrix sorted by single-linkage}
\label{fig:sample_covariance_single_linkage}
\end{subfigure}
\caption{
Single-linkage clustering applied to the roll call vote data with
similarity measure $\abs{x_{ij}}$, i.e. magnitude of the sample covariance.
(\subref{fig:dendrogram}) The leaves of the dendrogram are labeled by
political party affiliation. Note that the value of $\lambda$ 
\emph{decreases} with increasing height in the tree. 
The main right branch of the tree consists only
of Republicans, while the left branch contains all of the  Democrats and two
Independents. (\subref{fig:sample_covariance_single_linkage}) The sample
covariance matrix is sorted according to the left-to-right ordering of leaves 
in the single-linkage dendrogram. The two diagonal blocks again correspond to
Democrats (upper-left) and Republicans (lower-right). Two independent
senators are mixed together with the Democrats.
}
\label{fig:single_linkage}
\end{figure}
To contrast with the nominal clustering provided by political party
affiliation, the data analyst might instead employ an intrinsic cluster
analysis using the roll call votes alone.  One well-known technique is
single-linkage clustering \citep{sneath:application}, which is a
hierarchical clustering procedure that takes a similarity measure as input.
The algorithm starts from the finest clustering, where each senator is placed
in his/her own cluster, and iteratively merges the most similar pairs of
clusters until a single cluster remains. The similarity between a pair of
clusters is defined to be the maximum of the pairwise similarities between 
their respective constituents, so with each merge there is always a 
``single link'' that binds the clusters together. The results of the process 
are encoded in a dendrogram: a tree whose leaves are senators and  
internal vertices are merges. The height of a vertex corresponds 
to the similarity between a pair of clusters just before merging. Cutting 
the dendrogram at different heights induces different, but hierarchically arranged, clusterings.  (See \Cref{sec:single_linkage_extras} for a 
graph-theoretic description of single-linkage.)

\Cref{fig:single_linkage} shows the result of single-linkage 
clustering applied to the roll call data with similarity measure 
$\abs{x_{ij}}$. Though the choice of absolute sample covariance may seem 
odd, the rationale for this choice will become clear later. Looking at the 
dendrogram (\Cref{fig:dendrogram}), we see that the large gap between the 
merge heights of the Democrats (in the left branch of the dendrogram) and 
the Republicans (in the right branch of the dendrogram) reflects the 
polarization of their voting patterns. Comparing the sample covariance 
matrix sorted by political party (\Cref{fig:sample_covariance_party}) and by 
single-linkage (\Cref{fig:sample_covariance_single_linkage}), we see that 
single-linkage not only recovers the party affiliation, but the relative 
smoothness of the gradients of diagonal blocks suggests that single-linkage 
may have also discovered some finer structure in the data.

\subsection{Sparse multivariate methods}
Continuing with a progression of technique, the data analyst may find 
himself enticed by more recent and potentially more powerful multivariate methods employing sparsity.  Two such methods are sparse inverse covariance estimation and sparse principal components analysis (or sparse PCA). 
\citet[Chapters 8.2 and 9]{hastie.tibshirani.ea:statistical} give an excellent overview and bibliographic notes. 
These methods can be viewed as sparse estimators of functionals of a 
population covariance matrix $\Sigma$. 
In one case, the functional is simply the inverse $\Sigma^{-1}$, 
while for PCA the functional is the projection matrix of the subspace 
spanned by the $k$ leading eigenvectors (or principal component 
directions). There are many different formulations of these methods; 
here we consider two formulations based on convex programming: Graphical Lasso~
\citep{yuan.lin:model,friedman.hastie.ea:sparse,banerjee.ghaoui.ea:model} and Sparse PCA via Fantope Projection~\citep{.el-ghaoui.ea:direct,vu.cho.ea:fantope}.

Graphical Lasso is a penalized maximum likelihood method based on the 
convex optimization problem,
\begin{equation}
    \label{eq:glasso}
    \begin{aligned}
        &\text{minimize}
          &   &
        -\log\det(\theta) + \innerp{X}{\theta}
        + \lambda \norm{\theta}_1 \,,
    \end{aligned}
\end{equation}
where $\innerp{}{}$ denotes the trace inner product, $\lambda \geq 0$ 
is a tuning parameter, and $\norm{}_1$ is the $\ell_1$ norm--the sum of the 
absolute values of the coordinates of its argument.  \Cref{eq:glasso} 
is a penalized Gaussian log-likelihood.  The $\ell_1$ penalty encourages 
sparsity in the solution, with larger values of $\lambda$ yielding solutions 
with more zero entries.  If $\theta$ is the inverse covariance matrix of a 
multivariate Gaussian distribution, then the interpretation is that 
$\theta_{ij}$ is $0$ if and only if variables $i$ and $j$ are conditionally 
independent, given the other variables.

Sparse PCA via Fantope Projection is also based on convex optimization, 
but more specifically, it is based the semidefinite optimization problem, 
\begin{equation}
    \label{eq:fps}
    \begin{aligned}
        &\text{maximize}
          &   &
          \innerp{X}{\theta}
        - \lambda \norm{\theta}_1
        \\
        &\text{subject to}
          &   & \theta \in \mathcal{F}^k
          \,,
    \end{aligned}
\end{equation}
where
\begin{equation}
    \label{eq:fantope}
    \mathcal{F}^k 
    \coloneqq 
    \set{\theta \given 0 \preceq \theta \preceq \id, \trace(\theta) = k}
    \,.
\end{equation}
This can be viewed as an $\ell_1$ penalized convex relaxation of the variance 
maximization problem. The constraint set $\mathcal{F}^k$ 
consists of symmetric matrices with 
eigenvalues between $0$ and $1$ and whose trace is equal to $k$.  
This is called the Fantope and it is the convex hull of rank-$k$ 
projection matrices \citep{vu.cho.ea:fantope}. The interpretation of  
$\lambda$ and the $\ell_1$ penalty are similar to the Graphical Lasso---they 
influence the sparsity of the solution. Sparsity of a projection implies 
that the principal components depend on a small number of variables.
There is, however, an additional user-chosen parameter $k$ that specifies 
the desired rank of estimated projection matrix and hence the number of 
principal components.

\begin{figure}[tp]
\centering
\begin{subfigure}[t]{0.33\columnwidth}
\centering
\includegraphics[width=0.9\columnwidth]{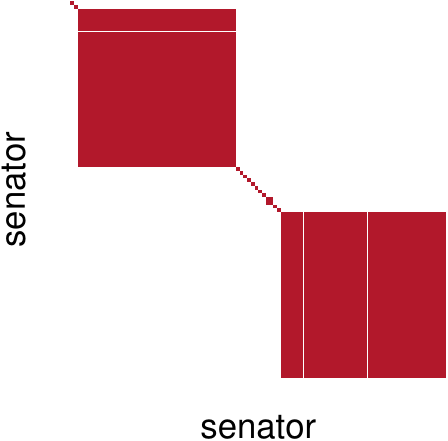}
\caption{Single-linkage clustering}
\label{fig:single_linkage_clustering}
\end{subfigure}%
~
\begin{subfigure}[t]{0.33\columnwidth}
\centering
\includegraphics[width=0.9\columnwidth]{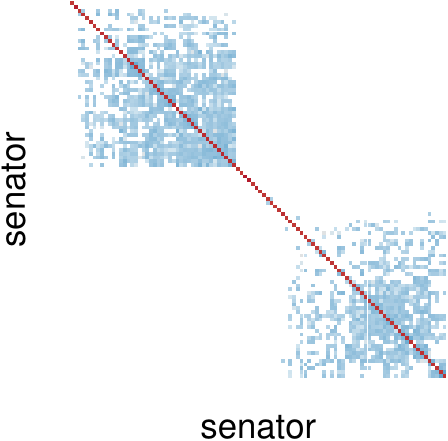}
\caption{Graphical Lasso}
\label{fig:glasso}
\end{subfigure}%
~
\begin{subfigure}[t]{0.33\columnwidth}
\centering
\includegraphics[width=0.9\columnwidth]{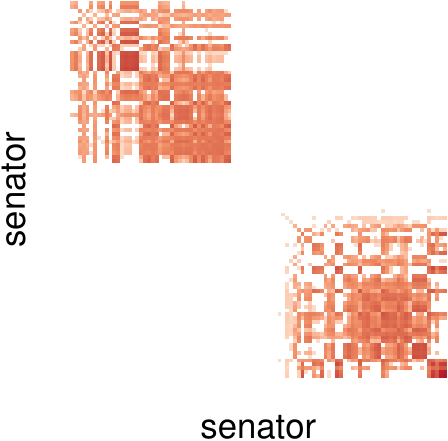}
\caption{Sparse PCA via Fantope Projection}
\label{fig:fps}
\end{subfigure}
\caption{
(\subref{fig:single_linkage_clustering}) The single-linkage clustering 
matrix and estimates of the (\subref{fig:glasso}) inverse covariance matrix 
and (\subref{fig:fps}) principal components projection matrix. 
These are all based on the senator-senator sample covariance matrix, and in 
all three cases the tuning parameters and dendrogram cut height are set 
to $\lambda = 0.7$.
}
\label{figure:comparison}
\end{figure}

\Cref{figure:comparison} shows the results of Graphical Lasso and 
Sparse PCA via Fantope Projection\footnote{Software implementations are 
provided by the R packages \texttt{glasso} 
\citep{friedman.hastie.ea:glasso} and \texttt{fps} 
\citep{vu:fantope}, respectively.} 
applied to the senator-senator sample covariance 
matrix~\Cref{eq:sample_covariance}. The tuning parameter for both procedures 
was set to $\lambda = 0.7$ and $k=5$ was chosen for Sparse PCA. \Cref{fig:single_linkage_clustering} shows 
the result of cutting single-linkage dendrogram at $\lambda = 0.7$ as 
a clustering matrix---a binary matrix with $1$ in entry $i,j$ if and only if 
$i$ and $j$ are in the same cluster. Remarkably, the block-diagonal 
structure is very similar across all three methods, and all three methods 
capture large chunks of the two major political parties.  In fact, the 
supports of both the Graphical Lasso and Sparse PCA estimates are contained 
in the support of the single-linkage clustering matrix, and this continues 
to hold for other choices of $\lambda$. One possible 
summary of this phenomenon is that the Graphical Lasso and Sparse 
PCA seem to be refinements of single-linkage.  While single-linkage easily 
discovers the two big blocks, the more sophisticated techniques reveal 
finer structure within the blocks. 
\subsection{Exact thresholding, the Graphical Lasso, and more}
\label{sec:exact_thresholding}

The similarity between Graphical Lasso and single-linkage  clustering shown in
\Cref{figure:comparison} is an instance of  the ``exact thresholding''
phenomenon first observed by
\citet{mazumder.hastie:exact,witten.friedman.ea:new}.  In brief, they
proved that the graph formed by thresholding the entries of $X$ at level
$\lambda$---by setting to zero any entry with $\abs{x_{ij}} \leq
\lambda$---and the estimated inverse covariance graph produced by the
Graphical Lasso with tuning parameter $\lambda$ have exactly the same
connected components. In other words, the thresholded matrix $X$ and the
Graphical Lasso estimate have exactly the same block-diagonal structure. 
The proofs of \citet{mazumder.hastie:exact,witten.friedman.ea:new} are 
similar; they are based on direct examination of the Karush--Kuhn--Tucker 
(KKT) optimality conditions for \Cref{eq:glasso} and 
exploit special properties of the log-determinant. 
Building on the exact thresholding phenomenon,
\citet{tan.witten.ea:cluster} later observed that the connected components
of the Graphical Lasso correspond to the clusters of single-linkage with
similarity measure $\abs{x_{ij}}$.

The connection between exact thresholding, the Graphical Lasso, and 
single-linkage clustering has several implications, and two perspectives have  
emerged in the literature: algorithmic and methodological. 
\citet{witten.friedman.ea:new,mazumder.hastie:exact} showed 
that exact thresholding leads to faster algorithms for the Graphical Lasso. For a $p \times p$ input $X$, generic solvers for the Graphical Lasso 
optimization problem \Cref{eq:glasso} have $O(p^3)$ time complexity per 
iteration.  On the other hand, thresholding and identifying the connected 
components has worst case time complexity $O(p^2)$. Once the connected 
components are identified, the parameter space, i.e. the feasible set, of 
the optimization problem can be reduced and decomposed to smaller, 
separate blocks. This reduces the Graphical Lasso optimization problem into 
separate smaller problems that can be solved in parallel and more quickly 
than the original problem. This algorithmic aspect of phenomenon has been 
extended on a case-by-case basis to various generalizations of the Graphical 
Lasso \citep{danaher.wang.ea:joint,mohan.london.ea:node-based,qiao.guo.ea:functional,tan.london.ea:learning,zhu.shen.ea:structural}. 

On the methodological side, \citet{gsell.taylor.ea:adaptive} used the 
monotonicity property implied by the exact thresholding phenomenon to 
develop adaptive sequential hypothesis tests based on examining ``knots'' 
in the Graphical Lasso solution path---these knots correspond to the merge 
events in single-linkage. \citet{tan.witten.ea:cluster} took a critical 
perspective by using the connection to motivate alternative estimators of 
the inverse covariance matrix. They noted that Graphical Lasso 
could be viewed as a two-step procedure. In the first step, it performs 
single-linkage clustering with similarity measure $\abs{x_{ij}}$. In the 
second step, it performs penalized maximum likelihood estimation 
on each connected component. Focusing on the first step, they 
argue that single-linkage clustering has an undesirable ``chaining'' 
effect \citep[see, e.g.,][]{hartigan:consistency}, and propose to 
replace it with an alternative clustering algorithm. They call 
the resulting two-step estimator ``Cluster Graphical Lasso,'' and 
demonstrate empirically some of its advantages over the Graphical Lasso.

The implications of exact thresholding discussed above add insight to our
collective understanding of the Graphical Lasso. Recalling the questions
posed in the introduction, we see that the exact thresholding phenomenon
explains that there are hidden commonalities between single-linkage
clustering and the Graphical Lasso, and that this can be exploited for
reduction in computation. Yet there is much more to the phenomenon. In
\Cref{sec:single_linkage_symmetry}, we will see that not only can the
parameter space of the Graphical Lasso problem be reduced, but that the
input $X$ to the Graphical Lasso optimization problem  can essentially be
replaced by $\SLT_\lambda(X)$, the \emph{single-linkage thresholding
operator}: 
\begin{equation*}
    \bracks{\SLT_\lambda(X)}_{ij} = 
    \begin{cases}
        x_{ij} 
        &\text{ if $i \sim_{\lambda} j$, and} \\
        0 & \text{otherwise,}
    \end{cases}
\end{equation*}
where $i \sim_{\lambda} j$ means that $i$ and $j$ are in the same single-linkage cluster at level $\leq \lambda$. 
This will demonstrate that there are irrelevant parts of the data that can  
be removed, and perhaps more surprisingly, 
\Cref{sec:reduction,sec:single_linkage_symmetry} will show that this 
type of phenomenon extends beyond the Graphical Lasso and holds 
simultaneously for many other procedures. 
\section{Computational sufficiency}
\label{sec:sufficiency}

Given a collection of procedures that share a common input domain 
$\mathcal{X}$, we would like to be able to reduce the input and yet still 
be able to compute each procedure.  So our goal is to define concepts that  
identify the information that is sufficient and necessary for 
computing all of the procedures. Some of the procedures considered in 
\Cref{sec:example} are based on optimization. Such estimators may 
not always be uniquely defined and they may not even exist for some inputs. 
For example, optimization problem 
\Cref{eq:fps} may have more than one solution, and \Cref{eq:glasso} 
may not even have a solution if $\lambda = 0$. 
With these complications in mind, we define a procedure 
to be a set-valued function on $\mathcal{X}$.
For example, let $T(x)$ denote the set of solutions of \Cref{eq:fps} 
when $X = x$. Then every element of $T(x)$ achieves the same value of the 
objective function. Defining a procedure to be a set-valued function 
provides a convenient way to describe equivalent and/or possibly 
void results.

\subsection{Definitions}
Let $\mathcal{M}$ be a collection of set-valued functions on $\mathcal{X}$. 
This is our collection of procedures.  The \emph{effective domain} 
of a set-valued function $T$ is defined to be
\begin{equation*}
    \effdom T \coloneqq \set{x \in \mathcal{X} \given T(x) \neq \emptyset } \,.
\end{equation*}
If the data analyst is content to obtain \emph{any} singleton 
from $T(x)$, whenever $x \in \effdom T$, then 
\begin{definition}
\label{def:computational_sufficiency}
A function $R$ on $\mathcal{X}$ is 
\emph{computationally sufficient} for $\mathcal{M}$ if for each  
$T \in \mathcal{M}$, there exists a set-valued function $f$ such that 
$f(T, R(x)) \neq \emptyset$ for all $x \in \effdom T$ and 
\begin{equation}
    \label{eq:computational_sufficiency_criterion}
    f(T, R(x)) \subseteq T(x)
    \quad \text{for all} \quad x \in \X
    \,.
\end{equation}
When this is the case, we may refer to $R$ as being a \emph{reduction}.
\end{definition}
Criterion \Cref{eq:computational_sufficiency_criterion} says that every 
$T \in \mathcal{M}$ is \emph{essentially} a function of $R$---up to the 
equivalence implied by the set-valuedness of $T$.  If $T$ is singleton-valued, 
then \Cref{eq:computational_sufficiency_criterion} becomes an equality. 
The identity map is trivially computationally sufficient, but 
clearly provides no reduction.
So in the pursuit of reduction without loss of information, 
there is an obvious interest in finding a maximal reduction. The following 
definitions parallel the definitions of necessary and minimal sufficient 
statistics.
\begin{definition}
\label{def:computational_necessity}
A function $U$ is \emph{computationally necessary} for $\mathcal{M}$  
if for each $R$ that is computationally sufficient for $\mathcal{M}$, 
there exists $h$ such that 
\begin{equation*}
    U(x) = h(R(x)) \quad \text{for all} \quad x \in \mathcal{X} 
    \,.
\end{equation*}
If $U$ is computationally necessary and computationally sufficient, 
then we say that $U$ is \emph{computationally minimal}.
\end{definition}
By definition, every singleton-valued $T \in \mathcal{M}$ is computationally 
necessary for $\mathcal{M}$. This simple observation leads to the following 
result and needs no proof.
\begin{lemma}
\label{lem:trivially_minimal}
If $R \in \mathcal{M}$ is singleton-valued and 
computationally sufficient for $\mathcal{M}$, then 
$R$ is computationally minimal.
\end{lemma}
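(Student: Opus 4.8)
The plan is to obtain the lemma essentially for free from the definitions, using the observation stated immediately before it: every singleton-valued member of $\mathcal{M}$ is computationally necessary for $\mathcal{M}$. Since $R$ is assumed to be computationally sufficient, computational minimality of $R$ requires only that $R$ also be computationally necessary. So the entire task reduces to checking that this preceding observation applies to $R$, after which the two conditions combine by the very definition of ``computationally minimal.'' I would therefore organize the argument as: (i) confirm $R$ is computationally necessary, and (ii) invoke the hypothesis and the definition to conclude.

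For step (i), although I could simply cite the quoted observation, it is worth sketching why it holds in this case, since that is where the (tiny) content lives. Let $R'$ be an \emph{arbitrary} function that is computationally sufficient for $\mathcal{M}$. Applying \Cref{def:computational_sufficiency} to the member $R \in \mathcal{M}$ produces a set-valued $f$ with $f(R, R'(x)) \neq \emptyset$ for all $x \in \effdom R$ and $f(R, R'(x)) \subseteq R(x)$ for all $x \in \X$. Because $R$ is singleton-valued, $\effdom R = \X$, so for every $x$ the set $f(R, R'(x))$ is a nonempty subset of the singleton $R(x)$; hence $f(R, R'(x)) = R(x)$. Defining $h$ to return the unique element of $f(R, \cdot)$, this reads $R(x) = h(R'(x))$ for all $x \in \X$. (The map $h$ is well defined on the range of $R'$: if $R'(x) = R'(x')$ then $f(R, R'(x)) = f(R, R'(x'))$, forcing $R(x) = R(x')$.) As $R'$ was an arbitrary computationally sufficient function, this is precisely the statement that $R$ is computationally necessary.

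For step (ii), I would simply combine the two facts. By hypothesis $R$ is computationally sufficient, and by step (i) it is computationally necessary; \Cref{def:computational_necessity} then declares $R$ computationally minimal, completing the argument.

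I do not expect a genuine obstacle here—the result is asserted in the text to need no proof. The only point demanding any care is the upgrade from ``nonempty and contained in a singleton'' to ``equal to that singleton,'' which relies on $R$ being singleton-valued (so that $\effdom R = \X$ and $R(x)$ has exactly one element); everything else is bookkeeping about the factorization of $R$ through the arbitrary sufficient $R'$.
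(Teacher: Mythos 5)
Your proposal is correct and follows the paper's own route: the paper dismisses the lemma as needing no proof precisely because every singleton-valued member of $\mathcal{M}$ is computationally necessary (which, combined with the assumed sufficiency, gives minimality by definition), and your steps (i)--(ii) simply spell out that observation, including the only nontrivial detail---that a nonempty subset of a singleton equals the singleton, using $\effdom R = \X$.
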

This seemingly trivial statement will turn out to be a useful device for 
establishing computational minimality.  An immediate consequence is that if 
any $T \in \mathcal{M}$ is a bijection, then the identity map 
is computationally minimal and no further reduction is possible. So in order 
for a nontrivial reduction to exist, it is necessary that all of the 
procedures in $\mathcal{M}$ be noninvertible.

\subsection{Reductions and partitions}
\label{sec:partitions}
Nontrivial computationally sufficient reductions are only possible when 
the procedures under consideration are themselves nontrivial reductions. 
Heuristically, this means that the preimages of the results of different 
procedures should be large and coincide with one another. 
If every $T \in \mathcal{M}$ is singleton-valued, then 
the criterion of computational sufficiency can be expressed more simply as
\begin{equation}
    \label{eq:computational_sufficiency_singleton}
    T(x) = f(T, R(x)) \quad \text{for all} \quad x \in \effdom T \,.
\end{equation}
When this is the case, computational sufficiency can be stated in terms of 
the partitions of $\X$. For a function $h$ on $\X$, let 
\begin{equation*}
    \sigma(h) \coloneqq \bigcup_{x \in \X} 
    \set[\big]{ \set{ u \given T(u) = T(x) } }\,,
\end{equation*}
i.e. $\sigma(h)$ is the the partition of $\X$ induced by $h$. 
We can order the set of all partitions of $\X$ by refinement, writing 
$\alpha \preceq \beta$ if $\alpha$ refines $\beta$. 
Then $R$ is computationally 
sufficient for $\mathcal{M}$ if and only if 
\begin{equation*}
    \sigma(R) \preceq \sigma(T) \quad \text{for all} \quad T \in \mathcal{M}
    \,.
\end{equation*}
A function $U$ on $\X$ is computationally necessary for $\mathcal{M}$ if 
and only if
\begin{equation*}
    \sigma(R) \preceq \sigma(U)
\end{equation*}
for all computationally sufficient $R$. 
Since ordering by refinement turns the set of all partitions of $\X$ into a complete lattice, there exists a 
coarsest partition that refines all of the partitions $\sigma(T)$ 
induced by $T \in \mathcal{M}$.  That greatest lower bound is the partition 
induced by a computationally minimal reduction for $\mathcal{M}$. 
This description of computational sufficiency in terms of partitions is 
conceptually useful, but it seems practically impossible to reason about specific procedures in terms of the partitions that they induce.

\subsection{Computational sufficiency versus statistical sufficiency}
Expression \Cref{eq:computational_sufficiency_singleton} bears a strong 
resemblance to the factorization criterion of the Fisher--Neyman Theorem 
for statistical sufficiency. Suppose that $\mathcal{P}$ is a family of 
positive densities on $\mathcal{X}$. Then a statistic $R$ is sufficient 
for $\mathcal{P}$ if and only if there exist $g$ and $h$ such that for all 
$q \in \mathcal{P}$
\begin{equation*}
    q(x) = g(q, R(x)) h(x) \quad \text{for all} \quad x \in \mathcal{X} 
    \,.
\end{equation*}
Fixing any $q_0 \in \mathcal{P}$ and dividing both sides, the above 
criterion is equivalent to the existence of $f$ such that for all 
$q \in \mathcal{P}$, 
\begin{equation*}
    \frac{q(x)}{q_0(x)} = f(q/q_0, R(x)) 
    \quad \text{for all} \quad x \in \mathcal{X} 
\end{equation*}
\citep[c.f.][Corollary 2]{halmos.savage:application}. 
Letting $\mathcal{Q} = \set{q/q_0 \given q \in \mathcal{P}}$ we see 
immediately that there is a clear computational interpretation of 
statistical sufficiency: 
$R$ is statistically sufficient for $\mathcal{P}$ if and only if it is 
computationally sufficient for the likelihood ratios $q/q_0$.  The 
connection goes further.  The following result is a straightforward 
consequence of definitions.
\begin{lemma}
\label{lem:trivial_minimal_reduction}
Let $\Lambda$ be a function on $\mathcal{X}$ with values taking the 
form of a function on $\mathcal{M}$.  For each $x \in \mathcal{X}$ define
\begin{equation*}
    \parens{\Lambda(x)}(T) = T(x) 
    \quad \text{for all} \quad T \in \mathcal{M}.
\end{equation*}
Then $\Lambda$ is computationally minimal for $\mathcal{M}$.
\end{lemma}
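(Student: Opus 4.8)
The plan is to split computational minimality into its two constituent requirements—computational sufficiency and computational necessity—and check each directly against \Cref{def:computational_sufficiency,def:computational_necessity}, using throughout that $\Lambda(x)$ is nothing but the complete tabulation $T \mapsto T(x)$ of the outputs of every procedure in $\mathcal{M}$. First I would dispatch sufficiency, which is immediate: the reconstruction map is evaluation. For a value $\phi$ in the range of $\Lambda$ (itself a function on $\mathcal{M}$) and a procedure $T \in \mathcal{M}$, set $f(T, \phi) \coloneqq \phi(T)$; this is set-valued since $\phi(T)$ is a set. Then $f(T, \Lambda(x)) = (\Lambda(x))(T) = T(x)$, so the inclusion $f(T, \Lambda(x)) \subseteq T(x)$ of \Cref{eq:computational_sufficiency_criterion} holds with equality for every $x \in \mathcal{X}$, and $f(T, \Lambda(x)) = T(x)$ is nonempty exactly on $\effdom T$. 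Hence $\Lambda$ is computationally sufficient.

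For necessity I would fix an arbitrary computationally sufficient $R$ and try to build $h$ with $\Lambda(x) = h(R(x))$ for all $x$. Sufficiency of $R$ provides, for each $T$, a set-valued $f$ with $f(T, R(x)) \subseteq T(x)$ and $f(T, R(x)) \neq \emptyset$ on $\effdom T$. The natural candidate is to let $h(r)$ be the function on $\mathcal{M}$ given by $T \mapsto f(T, r)$, so that $h(R(x))$ is the assignment $T \mapsto f(T, R(x))$, to be matched against $\Lambda(x)\colon T \mapsto T(x)$. Since $h$ is defined from $r$ alone it is well posed as a function of $R(x)$, and the whole claim collapses to verifying $f(T, R(x)) = T(x)$ for every $T$ and $x$.

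The hard part will be exactly this passage from $\subseteq$ to $=$: sufficiency of $R$ only guarantees $f(T, R(x)) \subseteq T(x)$, whereas necessity demands that $R(x)$ recover the \emph{entire} solution set $T(x) = (\Lambda(x))(T)$, not merely one admissible element of it. When every $T \in \mathcal{M}$ is singleton-valued the gap closes for free: \Cref{eq:computational_sufficiency_singleton} upgrades the inclusion to $f(T, R(x)) = T(x)$ on $\effdom T$, while off $\effdom T$ the inclusion $f(T, R(x)) \subseteq T(x) = \emptyset$ forces equality, so $h(R(x)) = \Lambda(x)$ identically and $\sigma(\Lambda)$ reduces to the greatest lower bound of the $\sigma(T)$ described in \Cref{sec:partitions}. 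The delicate point I would need to pin down is the genuinely set-valued regime, where two inputs with overlapping but unequal output sets can share a single admissible output, and hence a single value of some computationally sufficient $R$; handling (or explicitly excluding) that case is where I expect the real work of the necessity direction to lie.
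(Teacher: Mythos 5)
Your sufficiency half is correct and is surely the intended argument: taking $f(T,\phi)=\phi(T)$ gives $f(T,\Lambda(x))=T(x)$, so \Cref{eq:computational_sufficiency_criterion} holds with equality and nonemptiness on $\effdom T$ is automatic. (For comparison, the paper supplies no proof at all---it asserts the lemma is ``a straightforward consequence of definitions''---so the necessity half is exactly where the substance lies.) The obstruction you flagged in the necessity direction is not a gap in your write-up that more work could close: it refutes the lemma as literally stated, under \Cref{def:computational_sufficiency,def:computational_necessity}. Concretely, let $\mathcal{X}=\{a,b\}$ and $\mathcal{M}=\{T\}$ with $T(a)=\{1,2\}$ and $T(b)=\{1,3\}$. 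The constant function $R(a)=R(b)=0$ is computationally sufficient: $f(T,0)\coloneqq\{1\}$ is nonempty and satisfies $f(T,R(x))\subseteq T(x)$ for both inputs. Yet $\Lambda(a)\neq\Lambda(b)$ because $T(a)\neq T(b)$, while $R(a)=R(b)$, so no function $h$ can satisfy $\Lambda(x)=h(R(x))$ for all $x$. Hence $\Lambda$ is computationally sufficient but not computationally necessary, and the failure occurs exactly through the ``overlapping but unequal output sets'' scenario you isolated.

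What your argument does prove is the lemma under the additional hypothesis that every $T\in\mathcal{M}$ is (at most) singleton-valued: then $h(r)\coloneqq\parens{T\mapsto f(T,r)}$ works, since $f(T,R(x))\subseteq T(x)$ together with nonemptiness on $\effdom T$ forces $f(T,R(x))=T(x)$ everywhere (both sets being empty off $\effdom T$), so $h(R(x))=\Lambda(x)$ identically. That is precisely the regime in which the paper applies the lemma: the collection $\mathcal{Q}$ of likelihood ratios $q/q_0$ consists of singleton-valued functions, so the conclusion that likelihood ratios are computationally minimal---and hence statistically (minimal) sufficient---survives intact. So treat your ``delicate point'' as a finding rather than a deficiency: the lemma needs a singleton-valuedness hypothesis (or a weakened, set-valued notion of computational necessity), and your proposal is a complete and correct proof of that corrected statement.
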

Applying this to $\mathcal{Q}$, we see that
that the likelihood ratios are computationally minimal, and hence 
statistically sufficient. We can also deduce that they are statistically 
minimal sufficient.

One way to view the philosophical difference 
between computational sufficiency and statistical sufficiency is that 
definition of statistical sufficiency starts from conditional 
probability and is in essence about isolating the information that 
is sufficient for computing conditional expectation for any 
distribution in the model. In the measure-theoretic setting this, 
unfortunately, entails substantial technical complications that preclude 
the conclusion of the Fisher--Neyman factorization theorem from always 
being true. Computational sufficiency, on the other hand, starts from 
a definition that is analogous to the factorization criterion, and 
that directly isolates the information that is sufficient for computing 
either the procedures or the likelihood.

\section{Expofam-type estimators}
\label{sec:framework}
To demonstrate the general existence and feasibility of computationally sufficient reductions 
we introduce a framework for procedures that are generalizations of penalized 
maximum likelihood for exponential family models.  Let $\mathcal{X}$ be a 
Euclidean space equipped with an inner product $\innerp{}{}$ which induces a 
norm $\norm{}$. We say that a set-valued function $T$ on $\mathcal{X}$ 
is an \emph{expofam-type} estimator if it has the form
\begin{equation}
    \label{eq:expofam-type}
    T(x) = 
    \argmin_\theta A(\theta) - \innerp{x}{\theta} + h_C(\theta) \,,
\end{equation}
where $A : \mathcal{X} \to \Real \cup \set{+\infty}$ is the called the 
\emph{generator} of $T$ and and 
$h_C : \mathcal{X} \to \Real \cup \set{+\infty}$ is the support function,
\begin{equation*}
    h_C(\theta) = \max_{z \in C} \innerp{z}{\theta} \,,
\end{equation*}
of a nonempty, closed and convex set $C$.  We will assume that $A$ is closed 
(lower semicontinuous), convex and proper (finite for at least one value in 
$\mathcal{X}$). 
The optimization problem in \Cref{eq:expofam-type} may possibly have multiple 
or no solutions depending on $x$, so it is important that we view $T$ as being a set-valued function on $\mathcal{X}$.

There are several important features of this formulation.  The objective 
function in \Cref{eq:expofam-type} should be viewed as being the sum of two 
parts: a loss, $A(\theta) - \innerp{x}{\theta}$, and a penalty, 
$h_C(\theta)$. Both parts are closed convex functions and so their sum is 
also a closed convex function. 
The loss strictly generalizes the negative log-likelihood of an exponential 
family. We only require that $A$ be a closed, convex and proper function, 
so in general it may not be the log-partition function of an exponential 
family of distributions. The penalty generalizes seminorms, and in fact any 
closed sublinear function can be viewed as the support function of some closed 
convex set 
\citep[e.g.][Theorem 3.1.1]{HUL:fundamentals}.
The importance of viewing the penalty in this way is that it establishes a 
link between functions and sets. 

Many existing procedures fit into the framework of \Cref{eq:expofam-type}, 
and it is useful to organize them according to their generator $A$ and penalty 
support set $C$. \Cref{tab:A_functions,tab:C_sets} gives some examples. 
There are numerous others, but our main focus in this article will be on the 
examples that follow.

\begin{table}[t]
\centering
\begin{tabular}{l|c}
    \hline
    Method & $A(\theta)$ \\
    \hline
    Least squares 
    & $\frac{1}{2} \norm{\theta}^2$ \\
    Constrained least squares
    & $\frac{1}{2} \norm{\theta}^2 + \ConvexIndicator{K}(\theta)$ \\
    Inverse covariance 
    & $-\log\det(-\theta)$ \\
    PCA 
    & $\ConvexIndicator{\mathcal{F}^k}(\theta)$ \\
    Ising model 
    & $\log \sum_{u \in \set{-1,+1}^p} \exp(\innerp{u u^T}{\theta})$ \\
    \hline
\end{tabular}
\caption{Examples of generators $A$}
\label{tab:A_functions}
\end{table}

\begin{table}[t]
\centering
\begin{tabular}{l|c}
    \hline
    Penalty & $C$ \\
    \hline
    Lasso ($\ell_1$)
    & $\set{z \given \norm{z}_\infty \leq \lambda}$ \\
    Group Lasso ($\ell_{1,2}$)
    & $\set{z \given \max_i \norm{z_i} \leq \lambda}$ \\
    General norms ($\nu$)
    & $\set{z \given \nu_*(z) \leq \lambda}$ \\
    Cone constraint ($\ConvexIndicator{K}$)
    & $K^\circ = \set{z \given h_K(z) \leq 0}$ \\
    \hline
\end{tabular}
\caption{Examples of penalty support sets $C$}
\label{tab:C_sets}
\end{table}

\subsection{Penalized least squares with constraints}
\label{example:least_squares}
The most basic example is obtained by taking 
\begin{equation*}
    A(\theta) = \frac{1}{2} \norm{\theta}^2 \,,
\end{equation*}
so that \Cref{eq:expofam-type} becomes equivalent to penalized 
least squares:
\begin{equation*}
    T(x) 
    = \argmin_\theta \frac{1}{2}\norm{x - \theta}^2 + h_C(\theta) 
    \,.
\end{equation*}
Sometimes it can be useful to put constraints on $\theta$, 
say $\theta \in K$ for some closed convex set $K$. 
We can incorporate this constraint into $A$ by adding the convex indicator 
function 
\begin{equation*}
    \ConvexIndicator{K}(\theta) 
    = 
    \begin{cases}
        0 &\text{if $\theta \in K$, and} \\
        +\infty &\text{otherwise.}
    \end{cases}
\end{equation*}
Then with 
\begin{equation*}
    A(\theta) = \frac{1}{2} \norm{\theta}^2 + \ConvexIndicator{K}(\theta) \,,
\end{equation*}
\Cref{eq:expofam-type} becomes
\begin{equation*}
    T(x) = \argmin_{\theta \in K} \frac{1}{2} \norm{x - \theta}^2 + h_C(\theta)
    \,.
\end{equation*}

\subsection{L1 penalized estimators of symmetric matrices}
Penalization by the $\ell_1$ norm is a well-known method for inducing sparsity 
in estimates.  It corresponds to taking the penalty support set to be an 
$\ell_\infty$ ball, i.e. 
\begin{equation*}
    C = \set{z \given \norm{z}_\infty \leq \lambda}
    \,,
\end{equation*}
with $\lambda \geq 0$.
Combining this with the least squares leads to a special 
case of the estimator known as the Lasso \citep{tibshirani:regression}. 
Here we give four further examples that involve estimating a symmetric matrix 
from a symmetric matrix input:
\begin{equation*}
    X \in \mathcal{X} 
    = \Sym_p 
    \coloneqq \set{ x \in \Real^{p \times p} \given x = x^T} 
    \,.
\end{equation*}
In all four cases, the set $C$ is taken to be
\begin{equation*}
    C = \set{z \in \Sym_p \given \norm{z}_\infty \leq \lambda}
    \,,
\end{equation*}
which makes $h_C = \norm{}_1$ the entrywise $\ell_1$ norm of a symmetric 
matrix.
Alternatively, we could consider a weighted version 
\begin{equation*}
    C 
    = \set{z \in \Sym_p \given \abs{z_{ij}} \leq \lambda_{ij}
        \text{ for all $i,j$}
    }\,,
\end{equation*}
with $\lambda_{ij} \geq 0$. For example, this could be used to avoid 
penalizing the diagonal by setting $\lambda_{ii} = 0$.

\begin{example}[Graphical Lasso]
\label{example:glasso}
The Graphical Lasso \Cref{eq:glasso} corresponds to selecting
\begin{equation*}
    A(\theta) = 
    \begin{cases}
        -\log\det(-\theta) &\text{if $-\theta \succeq 0$} \\
        +\infty & \text{otherwise.}
    \end{cases}
\end{equation*}
Note that we have reversed the sign of $\theta$ in this formulation so 
that
\begin{equation*}
    T(X) 
    = \argmin_\theta -\log\det(-\theta) - \innerp{X}{\theta} + \lambda \norm{\theta}_1
    \,,
\end{equation*}
but we could have instead replaced $X$ by $-X$ in \Cref{eq:glasso}.
\end{example}

\begin{example}[Sparse PCA via Fantope Projection]
\label{example:sparse_pca}
Sparse PCA via Fantope Projection \Cref{eq:fps} 
corresponds to choosing $A$ to be the convex indicator function of the 
Fantope,
\begin{equation*}
    A(\theta) 
    = \ConvexIndicator{\mathcal{F}^k}(\theta) 
    = 
    \begin{cases}
        0 
        & \text{if $0 \preceq \theta \preceq \id$ and $\trace(\theta) = k$,} 
        \\
        +\infty & \text{otherwise.}
    \end{cases}
\end{equation*}
As mentioned in \Cref{sec:example}, $\mathcal{F}^k$ is the convex hull 
of rank-$k$ projection matrices and the optimization problem can be viewed 
as a convex relaxation of $\ell_1$ penalized variance maximization.
\end{example}

\begin{example}[Sparse covariance estimation with eigenvalue constraints]
\label{eq:sparse_covariance}
Sparse covariance estimation by minimizing an $\ell_1$ penalized Gaussian 
log-likelihood does not lead to a convex optimization problem. As 
an alternative, \citet{xue.ma.ea:positive,liu.wang.ea:sparse} have 
proposed using least squares with a constraint on the smallest eigenvalue to 
ensure positive definiteness. Their estimator fits into our framework by 
taking 
\begin{equation*}
    A(\theta) 
    = \frac{1}{2} \norm{\theta}^2 
    + 
    \begin{cases}
        0 & \text{if $\theta \succeq \epsilon \id$, and} \\
        +\infty & \text{otherwise,}
    \end{cases}
\end{equation*}
were $\epsilon > 0$ is a lower bound on the smallest eigenvalue of the 
estimate to ensure positive definiteness.  The resulting procedure 
takes a sample covariance matrix $X$ as input and is equivalent to
\begin{equation*}
    T(X) = \argmin_{\theta \succeq \epsilon \id} 
    \frac{1}{2} \norm{X - \theta}^2 + \lambda \norm{\theta}_1
    \,.
\end{equation*}
This is a special case of \Cref{example:least_squares}, and 
can be viewed as an $\ell_1$ penalized projection of $X$ onto a closed 
subset of the positive semidefinite cone.
\end{example}

\begin{example}[$\ell_1$ penalized Ising model selection]
\label{example:ising}
The Ising model is an attractive exponential family model for multivariate 
binary data, but the $\ell_1$ penalized likelihood approach has largely been 
avoided due to the computational intractability of its log-partition 
function, 
\begin{equation*}
    A(\theta) = \log \sum_{u \in \set{-1,+1}^p} \exp(\innerp{u u^T}{\theta})
    \,.
\end{equation*}
Instead, there have been proposals of alternative methods such as  
pseudo-likelihood \citep{hofling.tibshirani:estimation}, composite 
conditional-likelihood \citep{xue.zou.ea:nonconcave}, and local 
conditional-likelihood \citep{ravikumar.wainwright.ea:high-dimensional}. 
Leaving aside the computational issue for now, we recognize that 
the penalized maximum likelihood estimator based on the above $A$ 
falls into our framework.
\end{example}

\subsection{Group Lasso and other norms}
The group Lasso \citep{yuan.lin:model*1} is a block-structured generalization of the Lasso.  It imposes sparsity on blocks of entries of 
$\theta$ rather than on individual entries. For example, suppose that 
the entries of $\theta$ are partitioned into $m$ blocks as 
$\theta = (\theta_{B_1}, \theta_{B_2}, \ldots, \theta_{B_m})$.  The group Lasso penalty is defined as 
\begin{equation*}
    \sum_{j=1}^m \lambda_j \norm{\theta_{B_j}} \,.
\end{equation*}
This corresponds to the penalty support set
\begin{equation}
\label{eq:group_lasso}
    C 
    = \set[\Big]{z \given \norm{z_{B_j}} \leq \lambda_j \text{ for all $j$}}
    \,.
\end{equation}
The group Lasso penalty is itself a norm and more generally, if $\nu$ 
is a norm, then 
\begin{equation*}
    \nu(\theta) = \max\set{\innerp{z}{\theta} \given \nu_*(z) \leq 1}
    \,,
\end{equation*}
where
\begin{equation*}
    \nu_*(z) = \max\set{\innerp{y}{z} \given \nu(y) \leq 1}
\end{equation*}
is the dual norm. So $\nu$ is the support function of the unit ball of its 
dual norm.

\subsection{Cone constraints}
Methods that employ order restrictions such as isotonic regression 
\citep{barlow.bartholomew.ea:statistical} or those that employ 
positivity constraints can be viewed as special cases of requiring 
that $\theta$ lie in a closed convex cone $K$. For example, 
\citet{slawski.hein:estimation,lauritzen.uhler.ea:maximum} studied the 
estimation of the inverse covariance matrix of a multivariate Gaussian 
under the assumption that its off-diagonal elements are all nonnegative. 
This corresponds to the cone of symmetric matrices with nonnegative 
off-diagonal entries: 
\begin{equation*}
    K_{\geq 0}
    = \set{u \in \Sym_p \given u_{ij} \geq 0 \text{ for all $i \neq j$}}
    \,.
\end{equation*}
To incorporate a closed convex cone constraint into 
\Cref{eq:expofam-type}, 
we could add the convex indicator of the cone to $A$. For the inverse 
covariance estimator with the Gaussian log-likelihood, we reverse the sign 
of $\theta$ as in \Cref{example:glasso}, take $K = -K_{\geq 0}$ and 
\begin{equation*}
    A(\theta) 
    = -\log\det(-\theta) + \ConvexIndicator{K}(\theta)
    \,,
\end{equation*}
This induces a nonnegativity constraint on the off-diagonals of $-\theta$. 
We could also incorporate this constraint into $C$. 
In general, the convex indicator of a closed convex cone $K$ is equal to the 
support function of its polar, 
\begin{equation*}
    K^\circ = \set{ u \given \innerp{u}{v} \leq 0 \text{ for all } v \in K}
    \,,
\end{equation*}
i.e. $\ConvexIndicator{K} = h_{K^{\circ}}$ 
\citep[Example C.2.3.1]{HUL:fundamentals}. 
Then using the fact that the sum of support functions is the support 
function of the sum of the sets \citep[Proposition C.2.2.1,Theorem C.3.3.2]{HUL:fundamentals}, 
\begin{equation*}
    \ConvexIndicator{K}(\theta) + h_C(\theta)
    = 
    h_{K^\circ}(\theta) + h_C(\theta)
    =
    h_{K^\circ + C}(\theta)
    \,.
\end{equation*}
So there is some flexibility in how constraints are represented in 
this framework. 
\section{Group invariance and convexity}
\label{sec:group_invariance}
The generators of expofam-type estimators often have symmetries. For 
example, Graphical Lasso (\Cref{example:glasso}), Sparse PCA via Fantope Projection (\Cref{example:sparse_pca}), and the 
sparse covariance estimator in \Cref{eq:sparse_covariance} all satisfy
\begin{equation*}
    A(\theta) = A(U \theta U^{-1})
\end{equation*}
whenever $U$ is an orthogonal matrix.  
Least squares satisfies 
\begin{equation*}
    A(\theta) = A(U \theta)
\end{equation*}
for all orthogonal matrices $U$.  The generator of the Ising model 
(\Cref{example:ising}) is invariant under conjugation by diagonal 
sign matrices:
\begin{equation*}
    A(\theta) = A(D \theta D^{-1})
\end{equation*}
for all diagonal matrices $D$ with entries $\pm 1$ along their diagonal. Since 
these matrices are orthogonal, this invariance holds for the previously 
mentioned examples as well.
These symmetries are important, because they tell us about the contours of $A$.
We can express such symmetries in terms of a group of transformations.
Let $\Group$ be a compact subgroup of the orthogonal group $\Orthogonal(\X)$ of $\X$ acting linearly on $\X$.%
\footnote{The restriction to $\Group \subseteq \Orthogonal(\X)$ ensures that %
the inner product is $\Group$-invariant, %
i.e. $\innerp{g \cdot x}{g \cdot y} = \innerp{x}{y}$.} %
A function $f$ on $\X$ is $\Group$-invariant if it is invariant under 
the action of $\Group$ on $\X$, i.e. $f(g \cdot x) = f(x)$ for all 
$x \in \X$ and $g \in \Group$.

\subsection{Lower level sets, orbitopes, and group majorization}
An expofam-type estimator $T$ with generator $A$ and penalty support set $C$ 
can be computed by minimizing the sum of three terms: 
$A(\cdot)$, $-\innerp{x}{\cdot}$, and $h_C(\cdot)$. Let us focus 
temporarily on the first term and suppose that $A$ is $\mathcal{G}$-invariant. 
Fix any $u \in \X$; think of it as a candidate for the optimization 
problem. Note that the lower level sets of $A$ are also 
$\mathcal{G}$-invariant:
\begin{equation*}
    \set{v \given A(v) \leq A(u)}
    = 
    \set{v \given A(g \cdot v) \leq A(u)}
    = 
    g^{-1} \cdot \set{v \given A(v) \leq A(u)}
\end{equation*}
for all $g \in \Group$. 
In particular, the orbit of $u$ under $\Group$ satisfies
\begin{equation*}
    \Group \cdot u \subseteq \set{v \given A(v) \leq A(u)} 
    \,.
\end{equation*}
Since $A$ is closed and convex, its lower level sets are also closed 
and convex, and hence 
\begin{equation}
    \label{eq:orbitope_inclusion}
    \conv( \Group \cdot u ) 
    \subseteq 
    \set{v \given A(v) \leq A(u)}
    \,.
\end{equation}
The left-hand side of \Cref{eq:orbitope_inclusion} is the convex hull 
of the orbit of $\theta$ under $\Group$, and is called the 
\emph{orbitope} of $\Group$ with respect to $\theta$ 
\citep{sanyal.sottile.ea:orbitopes}. It is compact, because $\Group$ is 
compact. The inclusion \Cref{eq:orbitope_inclusion} is remarkable, 
because the orbitope depends only on $\Group$ and $\theta$, so 
\Cref{eq:orbitope_inclusion} holds \emph{simultaneously} for all 
$\Group$-invariant $A$. 

We can improve the value of $A$ by moving from $u$ to any point in the 
orbitope, but to do this we need to be able to identify elements of the 
orbitope.  That is, given $u, v \in \X$, we need to be able to determine if 
\begin{equation*}
    v \in \conv(\Group \cdot u) \,.
\end{equation*}
This relation is known as $\Group$-majorization 
\citep{eaton.perlman:reflection} and it induces a preorder 
(reflexive and transitive) on $\X$:
\begin{equation*}
    v \preceq_{\Group} u 
    \iff
    v \in \conv(\Group \cdot u)
    \iff
    \conv(\Group \cdot v) \in \conv(\Group \cdot u)
    \,.
\end{equation*}
The rightmost equivalence follows from the $G$-invariance and convexity of 
the orbitope. When the above holds, we say that $u$ $\Group$-majorizes $v$. 
Consequently, \Cref{eq:orbitope_inclusion} implies that $A$ is 
$\Group$-monotone:
\begin{equation*}
    v \preceq_{\Group} u \implies A(v) \leq A(u) \,.
\end{equation*}
To \emph{find} a point in the orbitope, suppose that there is a map 
$Q : \X \to \X$ satisfying
\begin{equation}
    \label{eq:averaging}
    Q u \preceq_{\Group} u \quad \text{for all} \quad u \in \X \,.
\end{equation}
Then $A(Q u) \leq A(u)$. So if we have such a map, then it ``solves'' the 
problem of improving the value of $A$, but to apply $Q$ to 
\Cref{eq:expofam-type} we will need to consider the other terms.

\subsection{Reduction of the parameter space}
Expressing \Cref{eq:expofam-type} in saddle point form,
\begin{equation}
    \label{eq:saddle_point}
    \min_\theta A(\theta) - \innerp{x}{\theta} + h_C(\theta)
    = \min_\theta \max_{z \in C} A(\theta) - \innerp{x - z}{\theta} \,,
\end{equation}
we would like to replace $\theta$ by $Q \theta$ but in such a way 
that the objective does not increase. With this foresight, 
suppose that $Q$ is linear and that its adjoint satisfies 
\begin{equation}
    \label{eq:dual_feasibility}
    Q^* (x - C) \subseteq x - C \,.
\end{equation}
Then by \Cref{eq:averaging,eq:dual_feasibility},
\begin{align*}
    \max_{z \in C} A(\theta) - \innerp{x - z}{\theta} 
    &\geq
    \max_{z \in C} A(Q \theta) - \innerp{Q^*(x - z)}{\theta} \\
    &=
    \max_{z \in C} A(Q \theta) - \innerp{x - z}{Q \theta} \\
    &\geq
    \min_\theta \max_{z \in C} A(\theta) - \innerp{x - z}{\theta} \,,
\end{align*}
because $Q \X \subseteq \X$. Now if $\theta_* \in T(x)$,  
then we can substitute it for $\theta$ above to obtain an equality, 
\begin{equation*}
    \min_\theta \max_{z \in C} A(\theta) - \innerp{x - z}{\theta}
    =
    \max_{z \in C} A(Q \theta_*) - \innerp{x - z}{Q \theta_*} 
    \,.
\end{equation*}
This implies that $Q \theta_* \in T(x)$, and we have proven the following 
theorem.
\begin{theorem}
\label{thm:parameter_reduction}
Let $T$ be an expofam-type estimator with 
a generator $A$ that is closed, convex, proper and $\Group$-invariant and 
penalty support set $C$. Fix $x \in \X$. 
If $Q: \X \to \X$ is a linear map satisfying
\begin{enumerate}
    \item (averaging) $Q u \preceq_G u$ for all $u \in \X$, and 
    \item (dual feasibility) $Q^*(x - C) \subseteq x - C$, 
\end{enumerate}
then $Q T(x) \subseteq T(x)$. 
Moreover, if $T$ is at most singleton-valued, then $Q T(x) = T(x)$.
\end{theorem}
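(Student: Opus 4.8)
The plan is to show that replacing $\theta$ by $Q\theta$ can only weakly decrease the objective $A(\theta) - \innerp{x}{\theta} + h_C(\theta)$ of \Cref{eq:expofam-type}, so that applying $Q$ to any minimizer again yields a minimizer. The starting point is the support-function (saddle-point) rewriting already recorded in \Cref{eq:saddle_point}: since $h_C(\theta) = \max_{z \in C}\innerp{z}{\theta}$, the objective at any $\theta$ equals $\max_{z \in C}\bracks{A(\theta) - \innerp{x - z}{\theta}}$. Writing $\Phi(\theta)$ for this value, the whole theorem reduces to the single inequality $\Phi(Q\theta) \leq \Phi(\theta)$ for every $\theta \in \X$.

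First I would establish that key inequality by invoking the two hypotheses in the correct order. Let $z^* \in C$ attain the maximum defining $\Phi(Q\theta)$, so $\Phi(Q\theta) = A(Q\theta) - \innerp{x - z^*}{Q\theta}$. Moving $Q$ across the inner product through its adjoint turns the cross term into $\innerp{Q^*(x - z^*)}{\theta}$, and this is exactly where dual feasibility \Cref{eq:dual_feasibility} enters: since $Q^*(x - z^*) \in x - C$, there is a $z' \in C$ with $Q^*(x - z^*) = x - z'$, giving $\Phi(Q\theta) = A(Q\theta) - \innerp{x - z'}{\theta}$. Next the averaging hypothesis \Cref{eq:averaging} yields $Q\theta \preceq_{\Group} \theta$, and because $A$ is $\Group$-invariant it is $\Group$-monotone, so $A(Q\theta) \leq A(\theta)$. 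Finally, as $z' \in C$, the quantity $A(\theta) - \innerp{x - z'}{\theta}$ is dominated by the maximum over $C$, which is $\Phi(\theta)$. Chaining these steps delivers $\Phi(Q\theta) \leq \Phi(\theta)$.

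With the key inequality in hand, the conclusion is immediate. If $\theta_* \in T(x)$ then $\Phi(\theta_*) = \min_\theta \Phi(\theta)$; since $Q\theta_* \in \X$ is feasible we have $\Phi(Q\theta_*) \geq \min_\theta \Phi(\theta) = \Phi(\theta_*)$, while the key inequality supplies the reverse, so $\Phi(Q\theta_*) = \min_\theta \Phi(\theta)$ and hence $Q\theta_* \in T(x)$. This proves $Q T(x) \subseteq T(x)$. For the refinement, suppose $T$ is at most singleton-valued: if $T(x) = \emptyset$ then $Q T(x) = \emptyset = T(x)$, while if $T(x) = \set{\theta_*}$ then $Q T(x) = \set{Q\theta_*} \subseteq T(x)$ forces $Q\theta_* = \theta_*$, whence $Q T(x) = T(x)$.

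I expect the only real subtlety to be organizing the comparison so the two hypotheses act at the correct stage: dual feasibility must be applied to the cross term \emph{after} the adjoint has been taken, so that $Q^*(x - z^*)$ is reabsorbed into $x - C$, and the $\Group$-monotonicity of $A$ should be applied only once $z'$ has been produced, so that the bound by $\max_{z \in C}$ closes the chain. Getting the quantifiers right — selecting the maximizer for $\Phi(Q\theta)$ rather than for $\Phi(\theta)$ — is what keeps every inequality pointing the same way.
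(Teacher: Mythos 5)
Your proof is correct and follows essentially the same route as the paper's: both pass to the saddle-point form \Cref{eq:saddle_point}, use dual feasibility to rewrite $Q^*(x-z)$ as $x - z'$ with $z' \in C$ once the adjoint has been taken across the inner product, use averaging together with the $\Group$-monotonicity of $A$ to get $A(Q\theta) \leq A(\theta)$, and conclude that applying $Q$ to a minimizer cannot increase the objective. The only cosmetic difference is bookkeeping: the paper bounds the terms uniformly over all $z \in C$ (so no attaining $z^*$ ever needs to be selected, which also covers the case where the supremum defining $h_C(Q\theta)$ is not attained), whereas you pick a maximizer $z^*$; otherwise the arguments coincide.
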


\subsection{Consequences}
The power of \Cref{thm:parameter_reduction} is that it applies generically 
to expofam-type estimators---it depends only on symmetries of their generator and the penalty support set. So rather than starting from a specific 
$T$, we could instead start from a compact subgroup $\Group$ of the orthogonal 
group and a closed convex set $C$.  Then \Cref{thm:parameter_reduction}
immediately leads to the following corollary.
\begin{corollary}
\label{cor:simultaneous_parameter_reduction}
Let $\Group \subset \Orthogonal(\X)$ be a compact subgroup, $C \subseteq \X$ 
be a nonempty closed convex set, and consider the collection $\mathcal{M}$ of 
all expofam-type estimators with a $\Group$-invariant generator and   
penalty support set $C$.  Fix $x \in \X$. If $Q : \X \to \X$ is a linear map satisfying the averaging and dual feasibility conditions of 
\Cref{thm:parameter_reduction}, then for all $T \in \mathcal{M}$ we have 
that $Q T(x) \subseteq T(x)$ and with equality if $T(x)$ is a singleton.
\end{corollary}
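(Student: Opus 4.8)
The plan is to recognize that \Cref{cor:simultaneous_parameter_reduction} is nothing more than \Cref{thm:parameter_reduction} applied to each member of the collection $\mathcal{M}$ at once, with the only substantive observation being that the two hypotheses imposed on $Q$ make no reference whatsoever to the generator $A$. So rather than reproving anything, I would simply check that the hypotheses of the single-estimator theorem are met uniformly across $\mathcal{M}$.

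First I would fix an arbitrary $T \in \mathcal{M}$. By the definition of $\mathcal{M}$, together with the standing assumptions on expofam-type estimators from \Cref{sec:framework}, the generator $A$ of $T$ is closed, convex, proper, and $\Group$-invariant, and $T$ has penalty support set $C$. These are precisely the structural hypotheses placed on $T$ in \Cref{thm:parameter_reduction}, so every element of $\mathcal{M}$ is an admissible input to that theorem.

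Next I would note that the map $Q$ already satisfies the two required conditions, since this is assumed in the statement of the corollary: the averaging condition $Q u \preceq_{\Group} u$ for all $u \in \X$ and the dual feasibility condition $Q^*(x - C) \subseteq x - C$. The key point---and the reason the reduction is \emph{simultaneous}---is that neither condition involves $A$ in any way. The averaging condition is a statement about $Q$ and the $\Group$-majorization preorder $\preceq_{\Group}$, which depends only on $\Group$; the dual feasibility condition is a statement about $Q^*$, $x$, and $C$ alone. Consequently the single fixed map $Q$ meets the hypotheses of \Cref{thm:parameter_reduction} for every $T \in \mathcal{M}$ without modification.

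Applying \Cref{thm:parameter_reduction} to each such $T$ then yields $Q T(x) \subseteq T(x)$, with equality whenever $T(x)$ is a singleton, and since $T$ was arbitrary in $\mathcal{M}$ this holds for all $T \in \mathcal{M}$. I expect no genuine obstacle here: the content of the corollary is not a new argument but the observation that the $A$-independence of the averaging and dual feasibility conditions promotes the per-estimator conclusion of \Cref{thm:parameter_reduction} to a uniform conclusion over the entire class $\mathcal{M}$.
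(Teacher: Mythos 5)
Your proposal is correct and is exactly the paper's (implicit) argument: the paper states that \Cref{thm:parameter_reduction} ``immediately leads to'' the corollary, and your write-up simply spells out why---the averaging and dual feasibility conditions on $Q$ involve only $\Group$, $x$, and $C$, never the generator $A$, so the theorem applies uniformly to every $T \in \mathcal{M}$. The singleton case also follows directly, since $Q T(x) \subseteq T(x)$ with $T(x)$ a singleton forces equality.
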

\Cref{cor:simultaneous_parameter_reduction} has several consequences. 
A practical consequence is that given an input $x$, 
if we can construct a $Q$ satisfying 
the conditions above, then we can reduce the optimization problem underlying 
every $T \in \mathcal{M}$. Each such $T$ has a solution in 
the range of $Q$, so we can 
construct $Q$ once and then optimize over its range rather than the entirety 
of $\X$ for each $T \in \mathcal{M}$. 
A theoretical consequence of \Cref{cor:simultaneous_parameter_reduction} is 
that it provides a new way to reason about the solutions of an optimization 
problem. For example, it is often of interest to 
determine conditions on $x$ that ensure $T(x)$ lies in some subspace, 
e.g. model selection consistency. This perspective relates \Cref{cor:simultaneous_parameter_reduction} to the primal-dual witness technique 
\citep{wainwright:sharp} which has been succesfully applied to the 
analysis of a large variety of sparse estimators. The advantage of 
\Cref{cor:simultaneous_parameter_reduction} is that it relies only on symmetry 
properties of the generator and so it holds simultaneously for all 
$T \in \mathcal{M}$. 
\section{Computationally sufficient reductions}
\label{sec:reduction}
The previous section shows that it may be possible to reduce the parameter 
space of procedures that are expofam-type estimators. In this section we will 
show how to build on \Cref{thm:parameter_reduction} to reduce the input space 
as well.  The main results are \Cref{thm:projection_reduction} and its 
corollary below; the theorem gives additional conditions for strengthening the 
result of the previous section to
\begin{equation*}
  Q T(x) = Q T(Q x) \subseteq T(x) \cap T(Q x)\,.
\end{equation*}
The result reveals a sort of duality between 
reducing the parameter space and reducing the input space for expofam-type 
estimators. \Cref{cor:csr} then shows how to exploit this to construct a 
computationally sufficient reduction. As with \Cref{thm:parameter_reduction}, 
this hinges on being able to construct suitable maps $Q$.  So the last part 
of the section is devoted to discussing a strategy and some examples.

\begin{theorem}[Reduction by projection]
\label{thm:projection_reduction}
Let $T$ be an expofam-type estimator with penalty support set $C$, 
and a generator $A$ that is closed, convex, proper and $\Group$-invariant. 
Fix $x \in \X$. If $Q: \X \to \X$ is an orthogonal projection satisfying
\begin{enumerate}
    \item (averaging) $Q u \preceq_G u$ for all $u \in \X$, 
    \item (dual feasibility) $Q (x - C) \subseteq x - C$, and
    \item (dual invariance) $Q (x - C) \subseteq Q x - C$, 
\end{enumerate}
then 
\begin{equation*}
  Q T(x) = Q T(Q x) \subseteq T(x) \cap T(Q x)
\end{equation*}
In particular, if $T(x)$ is a singleton, then 
\begin{equation*}
  Q T(x) = Q T(Q x) = T(x) = T(Q x) \,.
\end{equation*}
\end{theorem}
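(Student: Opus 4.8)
The plan is to deduce everything from \Cref{thm:parameter_reduction} by exploiting that an orthogonal projection is self-adjoint and idempotent, $Q^{*}=Q$ and $Q^{2}=Q$. First I would observe that, for a projection, the dual feasibility hypothesis $Q(x-C)\subseteq x-C$ is literally the hypothesis $Q^{*}(x-C)\subseteq x-C$ of \Cref{thm:parameter_reduction}; since averaging also holds, that theorem immediately gives $Q\,T(x)\subseteq T(x)$. Next I would unpack dual invariance: writing $Q(x-C)=Qx-QC$, the inclusion $Qx-QC\subseteq Qx-C$ is equivalent to $QC\subseteq C$. This in turn yields dual feasibility \emph{at the input} $Qx$, because $Q(Qx-C)=Qx-QC\subseteq Qx-C$; averaging is input-independent, so \Cref{thm:parameter_reduction} applied with $Qx$ in place of $x$ gives $Q\,T(Qx)\subseteq T(Qx)$. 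Because $Q^{2}=Q$, each reduction sharpens to an identity, $Q\,T(x)=T(x)\cap\range Q$ and $Q\,T(Qx)=T(Qx)\cap\range Q$: a point fixed by $Q$ that solves a problem is its own image, and conversely every image lies in $\range Q$ and still solves the problem.

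The engine of the theorem is a single identity. Write $F_{y}(\theta)\coloneqq A(\theta)-\innerp{y}{\theta}+h_{C}(\theta)$ for the objective at input $y$. Using $Q^{*}=Q$ and $Q^{2}=Q$,
\[
  \innerp{x}{Q\theta}=\innerp{Qx}{\theta}=\innerp{Qx}{Q\theta},
\]
so $F_{x}(Q\theta)=F_{Qx}(Q\theta)$ for every $\theta$; that is, the two objectives coincide on $\range Q$. I would combine this with the two reductions as follows. Each problem attains its optimum somewhere in $\range Q$ (the reductions send a minimizer to a minimizer lying in $\range Q$), and on $\range Q$ the objectives agree, so the two optimal values are equal, say to $m$, and the range-restricted solution sets agree:
\[
  T(x)\cap\range Q=\set{\theta\in\range Q\given F_{x}(\theta)=m}
  =\set{\theta\in\range Q\given F_{Qx}(\theta)=m}=T(Qx)\cap\range Q.
\]
Via the identities from the first paragraph this says exactly $Q\,T(x)=Q\,T(Qx)$, and since each equals a subset of both $T(x)$ and $T(Qx)$, we obtain the main assertion $Q\,T(x)=Q\,T(Qx)\subseteq T(x)\cap T(Qx)$.

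For the singleton conclusion, suppose $T(x)=\set{\theta_{*}}$. The reduction $Q\,T(x)\subseteq T(x)$ forces $Q\theta_{*}=\theta_{*}$, so $\theta_{*}\in\range Q$ and $Q\,T(x)=\set{\theta_{*}}$; the main chain then gives $Q\,T(Qx)=Q\,T(x)=T(x)=\set{\theta_{*}}$ and, since $Q\,T(Qx)\subseteq T(Qx)$, that $\theta_{*}\in T(Qx)$. The only thing left is the reverse inclusion $T(Qx)\subseteq\set{\theta_{*}}$, and this is where I expect the real work to be. The range-restricted sets already coincide, $T(Qx)\cap\range Q=\set{\theta_{*}}$, so the task is to rule out a minimizer $\phi\in T(Qx)$ with $\phi-\theta_{*}\in\ker Q$ and $\phi\neq\theta_{*}$. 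Cross-membership forces $Q\phi=\theta_{*}$, and the equality cases of the two inequalities underlying the reduction—$A(Q\phi)\le A(\phi)$ from $\Group$-monotonicity and $h_{C}(Q\phi)\le h_{C}(\phi)$ from $QC\subseteq C$—must hold, so both $A$ and $h_{C}$ are constant along the segment $[\theta_{*},\phi]\subseteq T(Qx)$. The plan is to feed this flatness back into $F_{x}$: along that segment $F_{x}$ is affine with slope $-\innerp{x-Qx}{\phi-\theta_{*}}$, and I would use dual feasibility \emph{at} $x$—the one hypothesis not yet invoked here—to force this slope to vanish, so that $F_{x}(\phi)=m$ and hence $\phi\in T(x)=\set{\theta_{*}}$. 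Establishing that dual feasibility genuinely pins down this slope on $\ker Q$, rather than merely the range-restricted optimum, is the main obstacle; once it is settled, the remainder is bookkeeping with $Q^{2}=Q$ and \Cref{thm:parameter_reduction}.
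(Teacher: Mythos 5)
Your proof of the main display is essentially correct \emph{when both solution sets are nonempty}, and it is a genuinely different route from the paper's: you use \Cref{thm:parameter_reduction} twice as a black box (at $x$, and at $Qx$ after correctly observing that dual invariance is the same as $QC\subseteq C$ and hence yields dual feasibility at $Qx$), combined with the identity $F_x=F_{Qx}$ on $\range Q$, writing $F_y(\theta)\coloneqq A(\theta)-\innerp{y}{\theta}+h_C(\theta)$. The paper instead proves the pointwise inequality $F_{Qx}(\theta)\geq F_x(Q\theta)$ for all $\theta$ (dual invariance shrinks the inner maximum over $C$, then averaging and self-adjointness), and runs two elementwise arguments giving $QT(x)\subseteq T(Qx)$ and $QT(Qx)\subseteq T(x)$ before the same idempotence bookkeeping you do. The gap in your version is nonemptiness: the equality $QT(x)=QT(Qx)$ asserts in particular that $T(x)=\emptyset$ iff $T(Qx)=\emptyset$, and your argument starts by assuming both problems ``attain their optimum somewhere in $\range Q$.'' A black-box use of \Cref{thm:parameter_reduction} cannot rule out the scenario $T(x)\neq\emptyset$ but $T(Qx)=\emptyset$ with $\inf F_{Qx}<\min F_x$ unattained, because that theorem says nothing when a solution set is empty. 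The repair is to use what the \emph{proof} of \Cref{thm:parameter_reduction} actually establishes, namely the pointwise bound $F_y(Q\theta)\leq F_y(\theta)$ under averaging and dual feasibility at $y$; applied at $y=x$ and $y=Qx$, together with agreement on $\range Q$, it gives $\inf F_x=\inf F_{Qx}$ and transfers attainment in both directions.

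The singleton claim is where the proposal genuinely breaks down, and no repair of your flatness-and-slope program is possible: read literally (only $T(x)$, at this particular $x$, is a singleton), the conclusion $T(Qx)=T(x)$ is \emph{false}. Take $\X=\Real^2$, $\Group=\set{\id,\diag(1,-1)}$, $A(\theta)=\tfrac{1}{2}\theta_1^2$, $C=[-1,1]\times[0,1]$, so that $h_C(\theta)=\abs{\theta_1}+\max(\theta_2,0)$, with $x=(2,\tfrac{1}{2})$ and $Q=\diag(1,0)$. Averaging holds (the orbitope of $u$ is $\set{u_1}\times[-\abs{u_2},\abs{u_2}]$), $QC\subseteq C$, and $Q(x-C)=[1,3]\times\set{0}\subseteq[1,3]\times[-\tfrac{1}{2},\tfrac{1}{2}]=x-C$; yet $T(x)=\set{(1,0)}$ while $T(Qx)=\set{1}\times(-\infty,0]$. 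On the segment from $\theta_*=(1,0)$ to $\phi=(1,-1)$ both $A$ and $h_C$ are constant, exactly as you predicted, but the slope $-\innerp{x-Qx}{\phi-\theta_*}=\tfrac{1}{2}$ does not vanish even though dual feasibility holds at $x$---so dual feasibility cannot pin down the slope, which is why you could not close that step. What is true (and what the paper's proof implicitly relies on---its step ``$QT(Qx)=T(Qx)$'' also fails in this example) is the statement with the hypothesis read at the level of the map, as in \Cref{thm:parameter_reduction}: if $T$ is at most singleton-valued, then $QT(x)\subseteq T(x)$ forces $QT(x)=T(x)$, and $T(Qx)\supseteq QT(Qx)=QT(x)=T(x)\neq\emptyset$ together with $\card{T(Qx)}\leq 1$ forces $T(Qx)=T(x)$, so all four sets coincide with no extra work. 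So: patch the nonemptiness transfer as above, state the singleton hypothesis for the map $T$ rather than the value $T(x)$, derive it in one line from the main display, and delete the slope argument.
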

The proof is contained in \Cref{sec:proofs}, but to give some motivation, 
let us go completely through the saddle point formulation 
\Cref{eq:saddle_point} from the primal problem to the (Fenchel) dual problem, 
\begin{equation*}
  \min_\theta A(\theta) - \innerp{x}{\theta} + h_C(\theta) 
  = -\min_{w \in x - C} A^*(w) 
  \,.
\end{equation*}
$A^*$ is the convex conjugate of $A$. If $A$ is $\Group$-invariant, 
then so is $A^*$.  So we can try to exploit $\Group$-monotonicity.
Although the proof of the theorem does not explicitly use the 
dual, the gist of it is that we want to ensure that feasible set, $x - C$, of 
the dual problem can be replaced by 
$Q x - C$. That is the rationale behind the \emph{dual invariance} condition. 
The next lemma gives some simpler conditions to ensure that dual invariance 
holds. Its proof is in \Cref{sec:proofs}.
\begin{lemma}
\label{lem:dual_invariance_easy}
In \Cref{thm:projection_reduction}, if $Q C \subseteq C$, then 
dual invariance is satisfied. If $C$ is $\Group$-invariant, then dual 
invariance is implied by averaging.
\end{lemma}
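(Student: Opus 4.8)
The plan is to observe that both assertions reduce to the single set containment $Q C \subseteq C$, after which linearity of $Q$ does the rest.

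For the first assertion I would argue directly. Suppose $Q C \subseteq C$ and fix any $z \in C$. Since $Q$ is linear, $Q(x - z) = Q x - Q z$, and by hypothesis $Q z \in C$, so $Q x - Q z \in Q x - C$. Letting $z$ range over $C$ gives $Q(x - C) \subseteq Q x - C$, which is precisely the dual invariance condition of \Cref{thm:projection_reduction}. Note that this step uses only that $Q$ is linear, not the stronger hypothesis that it is an orthogonal projection.

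For the second assertion I would reduce to the first by deriving $Q C \subseteq C$ from averaging and the $\Group$-invariance of $C$. Fix $z \in C$. The averaging condition says $Q z \preceq_{\Group} z$, which by the definition of $\Group$-majorization means $Q z \in \conv(\Group \cdot z)$. Because $C$ is $\Group$-invariant, the orbit $\Group \cdot z$ is contained in $C$, and because $C$ is convex, so is its convex hull: $\conv(\Group \cdot z) \subseteq C$. Hence $Q z \in C$, establishing $Q C \subseteq C$, and dual invariance then follows from the first assertion.

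There is no genuine obstacle here; the lemma is essentially a repackaging of the orbitope inclusion \Cref{eq:orbitope_inclusion}, now applied to the penalty support set $C$ rather than to a lower level set of the generator. The only points requiring care are bookkeeping ones: tracking that merely linearity of $Q$ (not its projection property) is used in the first part, and verifying that $\Group$-invariance of $C$ together with convexity yields $\conv(\Group \cdot z) \subseteq C$—the underlying intuition being that an averaging map cannot push a point of a $\Group$-invariant convex set outside that set.
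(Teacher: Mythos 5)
Your proof is correct and follows essentially the same route as the paper: the first claim is immediate from linearity of $Q$, and the second reduces to the first via $Q z \in \conv(\Group \cdot z) \subseteq C$, exactly as in the paper's argument. Your additional remarks (that only linearity is needed for the first part, and that convexity plus $\Group$-invariance of $C$ give $\conv(\Group \cdot z) \subseteq C$) simply make explicit what the paper leaves implicit.
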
 

\subsection{A computationally sufficient reduction}
\Cref{thm:projection_reduction} guarantees that for each fixed $x$, 
if we can construct an orthogonal projection $Q_x$ satisfying the conditions 
of the theorem, then $Q_x T(x) = Q_x T(Q_x x) \subseteq T(x)$. If we let 
$S(x) = (Q_x, Q_x x)$, then $S$ is clearly computationally sufficient. 
However, this is not too useful, because $Q_x$ can depend on $x$ in a nontrivial way and it is not clear if $Q_x$ is a meaningful reduction of $x$. 
We would rather have that $R(x) = Q_x x$ alone be computationally sufficient. 
The main difficulty with applying \Cref{thm:projection_reduction} is that 
$Q_x T(R(x)) \subseteq T(x)$, but given $T(R(x))$ how do we find an element of 
$Q_x T(R(x))$ without relying on $Q_x$? The following proposition shows that 
this can be done by finding the minimum norm element.
\begin{proposition}
\label{pro:minimum_norm}
Let $B$ be a nonempty closed convex set and $P$ be an orthogonal projection 
that leaves $B$ invariant. Then $B$ has a unique minimum norm element 
$\theta_*$ and $P\theta_* = \theta_*$.
\end{proposition}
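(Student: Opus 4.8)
The plan is to treat the two assertions separately. First I would prove that any nonempty closed convex set has a unique element of minimal norm---this is just the classical projection theorem applied to the origin---and then I would derive the fixed-point property $P\theta_* = \theta_*$ almost for free by combining the invariance $PB \subseteq B$ with the non-expansiveness of an orthogonal projection. Throughout I write $r = \inf_{\theta \in B} \norm{\theta}$, which is finite since $B \neq \emptyset$.

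For existence, I would observe that the sublevel set $B \cap \set{\theta \given \norm{\theta} \leq r+1}$ is nonempty, closed and bounded, hence compact because $\X$ is finite-dimensional. The continuous function $\theta \mapsto \norm{\theta}$ attains its infimum on this compact set, yielding some $\theta_* \in B$ with $\norm{\theta_*} = r$. (In an infinite-dimensional Hilbert space one would instead take a minimizing sequence, show it is Cauchy via the parallelogram identity, and invoke closedness of $B$ for the limit; but finite-dimensionality makes the compactness route cleanest here.)

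For uniqueness, I would use the parallelogram law together with convexity. If $\theta_1, \theta_2 \in B$ both attain norm $r$, convexity gives $(\theta_1 + \theta_2)/2 \in B$, so $\norm{(\theta_1+\theta_2)/2} \geq r$. The identity $\norm{\theta_1 - \theta_2}^2 = 2\norm{\theta_1}^2 + 2\norm{\theta_2}^2 - 4\norm{(\theta_1+\theta_2)/2}^2 \leq 4r^2 - 4r^2 = 0$ then forces $\theta_1 = \theta_2$. This is the only place convexity of $B$ is used, and it is the crux of the uniqueness claim.

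Finally, for the fixed-point property, I would note that invariance gives $P\theta_* \in B$, so $\norm{P\theta_*} \geq r$. On the other hand, the orthogonal decomposition $\theta_* = P\theta_* + (I - P)\theta_*$ (using $P = P^* = P^2$, so the two summands are orthogonal) gives $\norm{\theta_*}^2 = \norm{P\theta_*}^2 + \norm{(I-P)\theta_*}^2$, whence $\norm{P\theta_*} \leq \norm{\theta_*} = r$. Thus $\norm{P\theta_*} = r$, so $P\theta_*$ is also a minimum norm element of $B$, and uniqueness forces $P\theta_* = \theta_*$. The whole argument is routine; the only point requiring care is recognizing that invariance plus non-expansiveness pins $P\theta_*$ down to the \emph{unique} minimizer rather than merely bounding its norm from above.
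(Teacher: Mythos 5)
Your proof is correct and takes essentially the same approach as the paper: the paper simply cites the classical projection theorem (existence and uniqueness of the metric projection of $0$ onto $B$) where you prove it directly via compactness and the parallelogram law, and your fixed-point step---invariance gives $P\theta_* \in B$, non-expansiveness gives $\norm{P\theta_*} \leq \norm{\theta_*}$, and uniqueness forces $P\theta_* = \theta_*$---is exactly the paper's argument.
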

\begin{proof}
Since $B$ is a nonempty closed convex set, it has a unique minimum norm element 
$\theta_*$---this is the metric projection of $0$ onto $B$ 
\citep[see, e.g.,][Theorem 3.16]{bauschke.combettes:convex}. Now 
$P \theta_* \in B$ and $\norm{P \theta_*} \leq \norm{\theta_*}$, because 
$P$ is an orthogonal projection. Since $\theta_*$ is the unique minimum norm 
element of $B$, it follows that $P \theta_* = \theta_*$.
\end{proof}
\begin{corollary}
\label{cor:csr}
Let $\Group \subset \Orthogonal(\X)$ be a compact subgroup, $C \subseteq \X$ 
be a nonempty closed convex set, and consider the collection $\mathcal{M}$ of 
all expofam-type estimators with a $\Group$-invariant generator and   
penalty support set $C$.  For each $x \in \X$, suppose that $Q_x : \X \to \X$ is an orthogonal projection satisfying the conditions of 
\Cref{thm:projection_reduction}. Then the function $R(x) = Q_x x$ is computationally sufficient for $\mathcal{M}$.
\end{corollary}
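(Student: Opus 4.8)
The goal is to show that $R(x) = Q_x x$ is computationally sufficient for $\mathcal{M}$, which by Definition~\ref{def:computational_sufficiency} means: for each $T \in \mathcal{M}$ I must exhibit a set-valued function $f$ such that $f(T, R(x))$ is nonempty whenever $x \in \effdom T$ and $f(T, R(x)) \subseteq T(x)$ for all $x$. The key difficulty, as the authors flag just before the corollary, is that Theorem~\ref{thm:projection_reduction} only gives me $Q_x T(R(x)) \subseteq T(x)$, and the map $Q_x$ depends on $x$; so I cannot simply define $f$ using $Q_x$, since $f$ is only allowed to see $T$ and the value $R(x)$, not $x$ itself. I need a recipe that recovers an element of $T(x)$ purely from the set $T(R(x))$.

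**The main idea.** The plan is to let $f(T, R(x))$ return the minimum-norm element of $T(R(x))$. Concretely, define $f(T, y) = \set{\theta_*}$ where $\theta_*$ is the unique minimum-norm element of $\clconv T(y)$ (or of $T(y)$ itself once I argue it is closed and convex), and $f(T,y) = \emptyset$ when $T(y) = \emptyset$. The engine driving this is Proposition~\ref{pro:minimum_norm}: because the objective in \Cref{eq:expofam-type} is a closed convex function, the solution set $T(y)$ is a nonempty closed convex set whenever it is nonempty, so it has a unique minimum-norm element. First I would verify that for an expofam-type estimator the argmin set is indeed closed and convex (it is a lower level set of a closed convex function), so Proposition~\ref{pro:minimum_norm} applies.

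**Chaining the inclusions.** Fix $T \in \mathcal{M}$ and $x \in \X$, and write $Q = Q_x$, $y = R(x) = Qx$. By hypothesis $Q$ satisfies the conditions of Theorem~\ref{thm:projection_reduction}, so $Q\,T(x) = Q\,T(Qx) \subseteq T(x) \cap T(Qx)$. The crucial structural observation is that $Q$ leaves $T(y) = T(Qx)$ invariant: from $Q\,T(Qx) \subseteq T(Qx)$ we get exactly $Q\,T(y) \subseteq T(y)$, so $T(y)$ is a nonempty closed convex set that $Q$ maps into itself. Applying Proposition~\ref{pro:minimum_norm} with $B = T(y)$ and $P = Q$, the minimum-norm element $\theta_*$ of $T(y)$ satisfies $Q\theta_* = \theta_*$. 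Since $\theta_* \in T(y) = T(Qx)$ and $Q\theta_* = \theta_*$, we have $\theta_* = Q\theta_* \in Q\,T(Qx) \subseteq T(x)$. Thus $f(T, R(x)) = \set{\theta_*} \subseteq T(x)$, which is precisely \Cref{eq:computational_sufficiency_criterion}. For the nonemptiness clause, note that $x \in \effdom T$ together with $Q\,T(x) = Q\,T(Qx)$ forces $T(Qx)$ to be nonempty, so the minimum-norm element exists and $f(T, R(x)) \neq \emptyset$.

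**The main obstacle.** The part that requires the most care is confirming that $f$ is a legitimate function of $(T, R(x))$ alone, independent of $x$: I must make sure $\theta_*$ is determined entirely by the set $T(y)$ and the fixed ambient norm, with no residual dependence on $Q_x$ or on which preimage $x$ produced $y$. This is exactly what Proposition~\ref{pro:minimum_norm} secures, since the minimum-norm element is \emph{unique} and defined by $T(y)$ alone; the role of the $Q$-invariance is only to guarantee that this intrinsically-defined point lands in $Q\,T(Qx)$ and hence in $T(x)$, rather than to define the point. A secondary subtlety is that different inputs $x, x'$ may yield the same reduced value $Qx = Q'x' = y$ while the associated projections $Q_x, Q_{x'}$ differ; the argument is robust to this because the conclusion $\theta_* \in T(x)$ is derived separately for each $x$ using only that $\theta_*$ is fixed by the corresponding $Q_x$, which Proposition~\ref{pro:minimum_norm} delivers for whichever projection leaves $T(y)$ invariant.
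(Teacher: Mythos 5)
Your proof is correct and takes essentially the same route as the paper's: both define $f(T,y)$ as the minimum-norm element of the closed convex solution set $T(y)$, use \Cref{thm:projection_reduction} to obtain $Q_x T(x) = Q_x T(R(x)) \subseteq T(x) \cap T(R(x))$ (which gives both the nonemptiness equivalence and the invariance of $T(R(x))$ under $Q_x$), and then apply \Cref{pro:minimum_norm} to conclude that the minimum-norm element is fixed by $Q_x$ and hence lies in $T(x)$. Your closing discussion of why $f$ depends only on $(T, R(x))$ and not on $Q_x$ makes explicit a point the paper leaves implicit, but the underlying argument is identical.
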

\begin{proof}
Let $T \in \mathcal{M}$. Note 
that $T(y)$ is closed and convex for each $y \in \X$, 
because $T(y)$ is the set of minimizers of a closed convex function. 
So the set-valued function 
\begin{equation*}
  f(T, y) = \argmin_{\theta \in T(y)} \norm{\theta} 
\end{equation*}
is at most singleton-valued. For each $x \in \X$, 
$Q_x$ satisifies the conditions of \Cref{thm:projection_reduction}. Thus, 
\begin{equation*}
  Q_x T(R(x)) = Q_x T(x) \subseteq T(x) \cap T(R(x)) \,.
\end{equation*}
$T(R(x))$ is nonempty if and only if $T(x)$ is nonempty, so
\begin{equation*}
  f(T, R(x)) \neq \emptyset \iff T(x) \neq \emptyset \,.
\end{equation*}
Moreover, $T(R(x))$ is closed convex and invariant under $Q_x$, so 
\Cref{pro:minimum_norm} implies that 
\begin{align*}
  f(T, R(x))
  = Q_x f(T, R(x)) 
  \subseteq Q_x T(R(x)) 
  \subseteq T(x) \,.
\end{align*}
Thus, $R$ is computationally sufficient for $\mathcal{M}$.
\end{proof}

\subsection{Constructing a reduction}
\Cref{cor:csr} 
gives sufficient conditions for constructing a computationally sufficient 
reduction.  We first need to identify a group $\Group$ and penalty support 
set $C$. Then there are three conditions: averaging, dual feasibility, and 
dual invariance.  \Cref{lem:dual_invariance_easy} gives cases where 
dual invariance is automatically satisfied. So given a collection of 
expofam-type estimators $\mathcal{M}$, we take the following steps.
\begin{enumerate}
\item Identify the orbitopes of $\Group$. This will help us determine when averaging holds and may suggest the form of the projections $Q_x$.
\item For each $x$, determine projections $Q_x$ such that 
      $Q_x u \in \conv(\Group \cdot u)$ for all $u$. This is averaging.
\item Verify that $Q_x (x - C) \subseteq x - C$. This is dual feasibility.
\item Dual invariance is automatically satisfied if $C$ is $\Group$-invariant 
or if $Q_x C \subseteq C$. Otherwise, verify that 
$Q_x(x - C) \subseteq Q_x x - C$.
\end{enumerate}
Each of these steps can be very involved and may require luck. Even the first 
step of identifying the orbitopes can be challenging. Many orbitopes are known, but due to limitations of space and scope we will not list any beyond those used in the examples. The existing literature on $\Group$-majorization 
\citep[see][]{eaton.perlman:reflection} and \citet{sanyal.sottile.ea:orbitopes} are good starting points for further exploration. 
See also \citet{negrinho.martins:orbit}. 

\subsection{Examples}
In this section we will work through three simple examples to demonstrate 
the strategy enumerated above. In all three cases we will keep the group 
$\Group$ fixed to be the group of sign symmetries. The application of 
the machinery developed in the preceding sections may seem like overkill 
for these examples, but the main point is to understand how different 
penalties interact with the group, because we will see similar patterns return in a more sophisticated form when we work on our main example in 
\Cref{sec:single_linkage_symmetry}.

\begin{example}[L1 penalties]
\label{sec:sign_symmetry_lasso}
Let $\X = \Real^n$ and $\mathcal{M}$ be the collection of expofam-type 
estimators with generators $A$ satisfying
\begin{equation*}
  A(D \theta) = A(\theta)
\end{equation*}
for all diagonal sign matrices $D$ and with penalty support set
\begin{equation*}
  C = 
  \set{z \in \Real^n 
  \given \abs{z_i} \leq \lambda_i, \text{ for } i=1,\ldots, n }
\end{equation*}
with $\lambda_i \geq 0$ for all $i$. 
In this case, $h_C$ is a weighted $\ell_1$ norm, and $\mathcal{M}$ includes 
the Lasso:
\begin{equation}
  \label{eq:lasso}
  \argmin_\theta \frac{1}{2} \norm{x - \theta}^2 
  + \sum_i \lambda_i \abs{\theta_i} \,.
\end{equation}
The group $\Group$ acts on $u \in \X$ by multiplying each entry by $\pm 1$, e.g. 
\begin{equation*}
  g \cdot u = g \circ u \,,
\end{equation*}
where $g \in \set{-1, +1}^n$ and $\circ$ denotes entrywise multiplication.
The orbitope is easily seen to be 
\begin{equation*}
  \conv(\Group \cdot u) 
  = \conv \set{d \circ u \given \norm{d}_\infty \leq 1 } \,.
\end{equation*}
So 
\begin{equation*}
  v \preceq_{\Group} u 
  \iff
  v = d \circ u \quad \text{with} \quad \norm{d}_\infty \leq 1 \,.
\end{equation*}
The map $u \mapsto d \circ u$ 
is linear and self-adjoint. It is idempotent if and only if  
$d \in \set{0, 1}^n$. So we will consider maps $Q$ of the form 
\begin{equation*}
  Q u = d \circ u
\end{equation*}
with $d \in \set{0, 1}^n$. 
For each coordinate $i$, the dual feasibility condition reduces to 
\begin{equation*}
  d_i = 0 \implies \abs{x_i} \leq \lambda_i \,.
\end{equation*}
There is some flexibility here.  At one extreme we could take $d_i = 1$ 
for all coordinates, but that would not provide any reduction. Instead, we 
make the dual feasibility condition tight by setting 
$d_i = 0 \iff \abs{x_i} \leq \lambda_i$. The resulting map is the 
hard-thresholding operator, 
\begin{equation*}
  [R(x)]_i =
  \begin{cases}
    x_i & \text{if $\abs{x_i} > \lambda_i$} \\
    0 & \text{otherwise.} 
  \end{cases}
\end{equation*}
The last condition to check is dual invariance. 
Since $C$ is $\Group$-invariant,
dual invariance is automatically satisfied (\Cref{lem:dual_invariance_easy}. 
So we have successfully shown that $R$ 
is computationally sufficient for $\mathcal{M}$.  

We can also establish computational minimality of $R$. 
Let $U$ be computationally 
sufficient for $\mathcal{M}$ and let $T$ be the Lasso \Cref{eq:lasso}. 
Since $T \in \mathcal{M}$, $T(x)$ is essentially a function of $U(x)$. 
So it is enough for us to show that $R(x)$ can be computed from $T(x)$. 
In this simple setting, the Lasso is actually the same as the 
soft-thresholding operator:
\begin{equation*}
  [T(x)]_i = 
  \begin{cases}
    x_i - \lambda_i \sign(x_i) & \text{if $\abs{x_i} > \lambda_i$} \\
    0 & \text{otherwise.}
\end{cases}
\end{equation*}
Then clearly, 
\begin{equation*}
  [R(x)]_i = 
  \begin{cases}
    [T(x)]_i + \lambda_i \sign([T(x)]_i) & \text{if $[T(x)]_i \neq 0$} \\
    0 & \text{otherwise.}
  \end{cases}
\end{equation*}
So $R$ is computationally minimal for $\mathcal{M}$. Notice however that 
this argument also shows that $T$ is computationally sufficient. Since 
$T \in \mathcal{M}$, it follows from \Cref{lem:trivially_minimal} that 
$T$ must also be computationally minimal.  So every procedure in 
$\mathcal{M}$ can simply be viewed as a refinement of hard-thresholding 
or, equivalently, soft-thresholding.
\end{example}

\begin{example}[Group Lasso]
\label{sec:sign_symmetry_group_lasso}
This next example extends the previous by considering expofam-type 
estimators on $\X = \Real^n$ with the Group Lasso penalty.  
Let $B_1,\ldots,B_m$ be a 
partition of $[n] = \set{1,\ldots,n}$. We continue to assume 
that the generators of $\mathcal{M}$ satisfy 
\begin{equation*}
  A(D \theta) = A(\theta)
\end{equation*}
for diagonal sign matrices, but now we take the penalty support set to be 
\begin{equation*}
  C = 
  \set{z \in \Real^n 
  \given \norm{z_{B_i}} \leq \lambda_i, \text{ for } i=1,\ldots,m }
\end{equation*}
with $\lambda_i \geq 0$. This corresponds to the Group Lasso penalty.
We have already discussed the group $\Group$ and orbtiope in 
\Cref{sec:sign_symmetry_lasso}. We will again consider maps of the form 
\begin{equation*}
  Q u = d \circ u \,,
\end{equation*}
with $d \in \set{0, 1}^n$. For a block of indices $B_i$, the dual 
feasibility condition holds if 
\begin{equation*}
  d_{B_i} = 0 \implies \norm{ x_{B_i} } \leq \lambda_i \,.
\end{equation*}
To make this tight, we set
\begin{equation*}
  d_{B_i} = 0 \iff \norm{ x_{B_i} } \leq \lambda_i \,.
\end{equation*}
Since the penalty support set $C$ for the Group Lasso is also 
$\Group$-invariant, dual invariance holds automatically. Thus, the 
blockwise hard-thresholding operator
\begin{equation*}
  [R(x)]_{B_i} = 
  \begin{cases}
    x_{B_i} & \text{if $\norm{ x_{B_i} } > \lambda_{B_i}$} \\
    0 & \text{otherwise} 
  \end{cases}
\end{equation*}
is computationally sufficient.  This is essentially the same as the 
previous example.  Using exactly the same technique as before, it can be shown 
that $R$ is computationally minimal.
\end{example}

\begin{example}[Positivity constraints]
In this final example consider expofam-type estimators on $\X = \Real^n$ 
with positivity constraints.  We will incorporate this by taking the 
penalty support set to be the polar of the nonnegative cone, i.e. 
\begin{equation*}
  C = \set{z \in \Real^n \given z_i \leq 0 \text{ for } i=1,\ldots,n }
\end{equation*}
so that
\begin{equation}
  h_C(\theta) = 
  \begin{cases}
    0 &\text{if $\theta_i \geq 0$ for all $i$} \\
    +\infty &\text{otherwise.}
  \end{cases}
\end{equation}
We will once again assume that the generators $A$ are sign symmetric, i.e. 
$A(D\theta) = A(\theta)$ for all diagonal sign matrices. Note that in this 
example, the penalty support set $C$ \emph{is not} $\Group$-invariant. 
That is the main point of this example. We have already determined the 
orbitope and the form of the projection $Q u = d \circ u$ in the previous two examples. 
The dual feasibility condition reduces to 
\begin{equation*}
  d_i = 0 \implies x_i \leq 0 \,.
\end{equation*}
To make it tight we will choose $d_i = 0 \iff x_i \leq 0$.
To verify dual invariance, note that $d \in \set{0,1}^n$ and  
\begin{equation*}
  d \circ C \subseteq C \,.
\end{equation*}
Then dual invariance holds, and the 
computationally sufficient reduction that we have found is the 
positive part operator:
\begin{equation*}
  [R(x)]_i = \max(x_i, 0) \,.
\end{equation*}
We can easily demonstrate the minimality of $R$ by considering the 
nonnegative least squares estimator, 
\begin{equation*}
  T(x) = \argmin_{\theta \geq 0} \frac{1}{2} \norm{x - \theta}^2 \,.
\end{equation*}
This is an expofam-type estimator with a sign symmetric generator. 
Since $T$ is singleton-valued, 
\Cref{thm:projection_reduction} tells us that $T(x) = T(R(x))$, i.e. 
\begin{equation*}
  \argmin_{\theta \geq 0} \frac{1}{2} \norm{x - \theta}^2
  = 
  \argmin_{\theta \geq 0} \frac{1}{2} \norm{R(x) - \theta}^2
  \,.
\end{equation*}
Since $R(x) \geq 0$, it follows that $T(x) = R(x)$. Then 
\Cref{lem:trivially_minimal} implies that $T$ is computationally minimal.
\end{example}
 
\section{Single-linkage and switch symmetry}
\label{sec:single_linkage_symmetry}
Equipped with the tools from \Cref{sec:framework,sec:group_invariance,sec:reduction}, we are finally ready to return to our main example: 
the hidden connection between single-linkage clustering and the 
sparse multivariate methods shown in \Cref{sec:example}. 
Let $\X = \Sym_p$. The first step is to identify a group. In 
analogy with the examples from the previous section, consider the group 
$\Group$ of diagonal sign matrices acting on $\X$ by 
conjugation.  Then let $\mathcal{M}$ be the collection of expofam-type estimators on $\X$ with generators $A$ satisfying 
\begin{equation*}
  A(D \theta D^{-1}) = A(\theta)
\end{equation*}
for all diagonal sign matrices and with penalty support set
\begin{equation*}
  C 
  = \set{ Z \in \Sym_p 
    \given \abs{Z_{ij}} \leq \lambda \text{ for all } i,j } \,,
\end{equation*}
such that $\lambda \geq 0$. 
This includes all of the $\ell_1$ penalized 
symmetric matrix estimators presented in the 
earlier sections: Graphical Lasso, Sparse PCA via Fantope Projection, 
the sparse covariance estimator with eigenvalue constraints, and 
$\ell_1$ penalized Ising model selection. We will show that 
using the computational sufficiency reduction techniques developed earlier, 
we inevitably arrive at single-linkage clustering.

\subsection{Cut orbitope}
The first step is to identify the orbitopes  and the $\Group$-majorization. 
This is related to the following set,
\begin{equation*}
  \Cut_p = \conv(\set{y y^T \given y \in \set{-1, +1}^p })
\end{equation*}
which is called the \emph{cut polytope} \citep{laurent.poljak:on}.
The following lemma describes the orbitope. Its proof is in \Cref{sec:proofs}.
\begin{lemma}
\label{lem:cut_orbitope}
Let $\Group$ be the group of diagonal sign matrices acting on
$U,V \in \Sym_p$ by conjugation, i.e.
\begin{equation*}
  g \cdot U = D U D^{-1}  
\end{equation*}
with $g \in \Group$ represented by a diagonal matrix $D$ whose diagonal
entries are $\pm 1$. Then 
\begin{equation*}
  \conv(\Group \cdot U) = \set{B \circ U \given B \in \Cut_p}\,,
\end{equation*}
and hence $V \prec_{\Group} U$ if and only if $V = B \circ U$ for some 
$B \in \Cut_p$.
\end{lemma}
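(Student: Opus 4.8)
The plan is to reduce the conjugation action to a Hadamard product and then invoke the linearity of that product together with the standard fact that a linear map carries a convex hull to the convex hull of the image.

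First I would identify the orbit explicitly. Writing $g \in \Group$ as $D = \diag(y)$ with $y \in \{-1,+1\}^p$, note that $D$ is a sign matrix so $D^{-1} = D$, and an entrywise computation gives $(DUD)_{ij} = y_i U_{ij} y_j = (y_i y_j) U_{ij}$. Since $(yy^T)_{ij} = y_i y_j$, this says exactly that $g \cdot U = (yy^T) \circ U$. Hence the orbit is
\[
  \Group \cdot U = \{\, (yy^T) \circ U : y \in \{-1,+1\}^p \,\},
\]
which is precisely the image of the generating set $S := \{\, yy^T : y \in \{-1,+1\}^p \,\}$ of the cut polytope under the Hadamard map $L_U : B \mapsto B \circ U$.

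The second step is the linearity argument. The map $L_U$ is linear on $\Sym_p$, and a linear map commutes with the formation of convex combinations, so $L_U(\conv S) = \conv(L_U(S))$. Since $\Cut_p = \conv S$ by definition, this yields
\[
  \{\, B \circ U : B \in \Cut_p \,\} = L_U(\Cut_p) = \conv(L_U(S)) = \conv(\Group \cdot U),
\]
which is the first assertion. The majorization claim then follows immediately from the definition $V \preceq_\Group U \iff V \in \conv(\Group \cdot U)$ given earlier.

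I do not expect a genuine obstacle here: the entire content is the observation that conjugation by a $\pm 1$ diagonal matrix is the same as Hadamard multiplication by the rank-one sign matrix $yy^T$, after which everything is linear. The only point that deserves explicit care is the step $L_U(\conv S) = \conv(L_U(S))$; this is routine for linear maps but should be stated, since it is exactly what converts the convex hull defining $\Cut_p$ into the convex hull defining the orbitope.
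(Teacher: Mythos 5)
Your proof is correct, but it takes a genuinely different route from the paper's. Both arguments begin with the same key observation: conjugation by $D = \diag(y)$ acts entrywise as the Hadamard product $U \mapsto (yy^T)\circ U$, so the orbit $\Group \cdot U$ is the image of the generating set $S = \set{yy^T \given y \in \set{-1,+1}^p}$ of $\Cut_p$ under the linear map $L_U : B \mapsto B \circ U$. From there you argue purely algebraically: a linear map commutes with convex hulls, so
\begin{equation*}
  \set{B \circ U \given B \in \Cut_p} = L_U(\conv S) = \conv(L_U(S)) = \conv(\Group \cdot U)\,.
\end{equation*}
The paper instead argues by duality: it observes that both sets are closed and convex and then shows they have the same support function, using the self-adjointness identity $\innerp{U\circ dd^T}{Z} = \innerp{U\circ Z}{dd^T}$ together with the fact that a set and its convex hull have the same support function. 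Your route is more elementary: it requires no topological input (the closedness of the two sets, which the paper asserts without verification; it does hold, since convex hulls of finite sets and linear images of polytopes are closed) and no appeal to the separation principle underlying the support-function criterion for equality of closed convex sets. What the paper's route buys is the explicit self-adjointness of $L_U$ with respect to the trace inner product, an identity in the same spirit as the saddle-point and dual-feasibility manipulations used throughout the rest of the paper. Both are complete proofs of the lemma, including the final majorization claim, which in each case is just the definition $V \preceq_{\Group} U \iff V \in \conv(\Group \cdot U)$.
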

For any $B \in \Cut_p$, the map $U \mapsto B \circ U$ is 
linear and self-adjoint and, by \Cref{lem:cut_orbitope}, 
\begin{equation*}
  U \preceq_{\Group} B \circ U \,.
\end{equation*}
So it satisfies the averaging condition of \Cref{thm:projection_reduction}.
To ensure it is an orthogonal projection we will also require 
idempotence: $B \circ (B \circ U) = B \circ U$ for all $U$.  This holds  
if and only if $B$ is a binary matrix. The following proposition helps us 
identify such $B$. Its proof is also in \Cref{sec:proofs}.
\begin{proposition}
\label{pro:arcsin_cut}
Let
\begin{equation*}
  K = \set[\big]{(2/\pi) \arcsin[ \Sigma ] 
  \given \Sigma \succeq 0, \diag(\Sigma) = \one } \,,
\end{equation*}
where $\arcsin[\cdot]$ means that the function is applied entrywise.
Then $\conv(K) = \Cut_p$.
\end{proposition}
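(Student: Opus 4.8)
The plan is to prove the two inclusions $\conv(K) \subseteq \Cut_p$ and $\Cut_p \subseteq \conv(K)$ separately. The single substantive ingredient is the Gaussian arcsine law (Sheppard's formula), which is precisely what produces the $\arcsin$; once that identity is in hand, both inclusions reduce to bookkeeping about convex hulls.

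First I would establish $\conv(K) \subseteq \Cut_p$. Since $\Cut_p$ is convex, it suffices to show $K \subseteq \Cut_p$. Fix a correlation matrix $\Sigma \succeq 0$ with $\diag(\Sigma) = \one$, let $g \sim N(0, \Sigma)$, and set $y = \sign(g) \in \set{-1,+1}^p$. The arcsine law gives, for every pair $i,j$, that $\E[y_i y_j] = \E[\sign(g_i)\sign(g_j)] = (2/\pi)\arcsin(\Sigma_{ij})$; the diagonal is automatic since $\Sigma_{ii} = 1$ forces $(2/\pi)\arcsin(1) = 1 = \E[y_i^2]$. Hence $(2/\pi)\arcsin[\Sigma] = \E[y y^T]$. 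Because $y$ ranges over the finite set $\set{-1,+1}^p$, this expectation is a finite convex combination of the matrices $y y^T$, and therefore lies in $\Cut_p$ by definition. This shows $K \subseteq \Cut_p$ and hence $\conv(K) \subseteq \Cut_p$.

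For the reverse inclusion $\Cut_p \subseteq \conv(K)$, I only need the generating vertices of the cut polytope to belong to $K$. Fix $y \in \set{-1,+1}^p$ and take $\Sigma = y y^T$, which is rank-one, positive semidefinite, and has unit diagonal since $y_i^2 = 1$. Then $\arcsin(\Sigma_{ij}) = \arcsin(y_i y_j) = \pm \pi/2$, so $(2/\pi)\arcsin[y y^T] = y y^T$. Thus every vertex $y y^T$ of $\Cut_p$ already lies in $K$, and taking convex hulls gives $\Cut_p = \conv(\set{y y^T \given y \in \set{-1,+1}^p}) \subseteq \conv(K)$. Combining the two inclusions yields $\conv(K) = \Cut_p$.

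The hard part, and the only place requiring a genuine computation, is Sheppard's formula; if I wanted the argument self-contained I would verify it for a single pair. For a standard bivariate normal with correlation $\rho = \Sigma_{ij}$, rotational invariance of the density gives $\Pr[\sign(g_i) \neq \sign(g_j)] = \arccos(\rho)/\pi$, the normalized angle between the two directions. Therefore $\E[\sign(g_i)\sign(g_j)] = 1 - 2\arccos(\rho)/\pi = (2/\pi)\arcsin(\rho)$, using $\arccos(\rho) = \pi/2 - \arcsin(\rho)$. This geometric fact is exactly why the elliptope of correlation matrices maps onto the cut polytope under the entrywise $(2/\pi)\arcsin$ transform, and it is the crux of the proposition.
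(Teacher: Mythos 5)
Your proof is correct and takes essentially the same approach as the paper's: both directions rest on the same two facts, namely that the rank-one sign matrices $y y^T$, $y \in \{-1,+1\}^p$, are fixed points of the entrywise $(2/\pi)\arcsin$ map (giving $\Cut_p \subseteq \conv(K)$), and that the Gaussian arcsine law exhibits each element of $K$ as $\E\{\sign(g)\sign(g)^T\}$, a convex combination of such sign matrices (giving $K \subseteq \Cut_p$). The only difference is cosmetic: the paper invokes the arcsine law for $\sign(Z-Y)$ with $Z,Y$ i.i.d.\ Gaussian vectors (citing Kruskal), whereas you apply it directly to $\sign(g)$ with $g \sim N(0,\Sigma)$ and additionally sketch a proof of Sheppard's formula via rotational invariance, making your version slightly more self-contained.
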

Note that if $\Sigma$ is a binary correlation matrix, then so is $\arcsin[\Sigma]$. Then it follows from 
\Cref{pro:arcsin_cut} that $\Cut_p$ contains all $p \times p$ 
binary correlation matrices. This leads us to consider projections of the form 
$U \mapsto B \circ U$ for $B$ that is a binary correlation matrix.

\subsection{Dual feasibility, ultrametrics, and single-linkage}
Dual invariance is automatically satisfied by \Cref{lem:dual_invariance_easy}, 
since $C$ is invariant under conjugation by diagonal sign matrices 
(\Cref{lem:dual_invariance_easy}). 
So all that remains is for us to verify dual feasibility. 
For a fixed input $X \in \Sym_p$, 
the dual feasibility condition is
\begin{equation}
  \label{eq:dual_feasibility_single_linkage}
  \abs{X_{ij}} > \lambda \implies B_{ij} = 1 \,.
\end{equation}
Setting $B_{ij}$ to $0$ everywhere else is not possible, 
because that could result in $B$ that is not a binary correlation matrix. 
To maximize the reduction we should minimize the number of nonzero entries of $B$ subject to the dual feasibility condition 
\Cref{eq:dual_feasibility_single_linkage} and the constraint that $B$ is a 
binary correlation matrix. This turns out to be 
related to \emph{ultrametric} matrices 
\citep{dellacherie.martinez.ea:inverse}. 
These are symmetric matrices $U$ that satisfy the ultrametric inequality
\begin{equation*}
  U_{ij} \geq \min(U_{ik}, U_{jk}) \quad \text{for all} \quad i,j,k \,.
\end{equation*}
The connection with symmetric binary correlation matrices is established 
in the following lemma, which is proved in \Cref{sec:proofs}.
\begin{lemma}
\label{lem:ultrametric_psd}
A symmetric binary matrix $B$ with ones along the diagonal is positive 
semidefinite if and only if it satisfies the ultrametric inequality.
\end{lemma}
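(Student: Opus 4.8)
The plan is to translate both the spectral condition and the ultrametric condition into a single combinatorial statement about the relation $i \sim j \iff B_{ij} = 1$. Because $B$ has unit diagonal this relation is reflexive, and because $B$ is symmetric it is symmetric. First I would observe that, for a binary matrix, the ultrametric inequality $B_{ij} \geq \min(B_{ik}, B_{jk})$ is precisely transitivity of $\sim$: the inequality is automatic whenever $\min(B_{ik}, B_{jk}) = 0$, while if the minimum equals $1$ then $B_{ik} = B_{jk} = 1$ forces $B_{ij} = 1$, which is exactly the implication $i \sim k, \ j \sim k \Rightarrow i \sim j$. Hence, for a symmetric binary matrix with unit diagonal, satisfying the ultrametric inequality is equivalent to $\sim$ being an equivalence relation.

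Given this, the direction ultrametric $\Rightarrow$ positive semidefinite is immediate from the block structure. After conjugating by a permutation matrix so that indices in the same equivalence class are contiguous---an orthogonal transformation that preserves positive semidefiniteness---$B$ becomes block diagonal, with each block equal to an all-ones matrix $\one \one^T$ of the size of the corresponding class. Each such block is rank one and positive semidefinite, so $B$ is positive semidefinite.

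For the converse I would argue by contraposition. If the ultrametric inequality fails, then by the first step transitivity fails, so there are distinct indices $i, j, k$ with $B_{ik} = B_{jk} = 1$ but $B_{ij} = 0$. The principal submatrix of $B$ on $\set{i, j, k}$ is then
\begin{equation*}
  \begin{pmatrix} 1 & 0 & 1 \\ 0 & 1 & 1 \\ 1 & 1 & 1 \end{pmatrix},
\end{equation*}
whose determinant equals $-1$. Since every principal minor of a positive semidefinite matrix is nonnegative, this contradicts $B \succeq 0$; hence positive semidefiniteness forces the ultrametric inequality.

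The argument is largely routine, and no single step is a genuine obstacle. The only points requiring a little care are the reduction of the ultrametric inequality to transitivity for binary matrices and the handling of degenerate triples in which two of $i, j, k$ coincide (there the inequality holds trivially, so they may be ignored). The main conceptual content is simply the recognition that ``binary $+$ symmetric $+$ unit diagonal $+$ ultrametric'' is the same as ``equivalence relation,'' after which both implications follow from the block all-ones structure and the single $3 \times 3$ determinant computation.
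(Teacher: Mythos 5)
Your proof is correct, and it diverges from the paper's in a meaningful way on one of the two directions. The direction ``positive semidefinite $\Rightarrow$ ultrametric'' is essentially identical in both arguments: the paper also argues by contraposition, exhibiting the same $3\times 3$ principal submatrix (up to relabeling of the indices) and noting it is indefinite, exactly your determinant-equals-$-1$ computation. The difference is in the converse. The paper disposes of ``ultrametric $\Rightarrow$ positive semidefinite'' by citing an external result (Theorem 3.5 of Dellacherie, Mart\'inez, and San Mart\'in), which asserts that \emph{every} ultrametric matrix---binary or not---is positive semidefinite. You instead exploit the binary structure directly: the observation that ``binary $+$ symmetric $+$ unit diagonal $+$ ultrametric'' is precisely an equivalence relation, so that after a permutation $B$ is block diagonal with all-ones blocks $\one\one^T$, each of which is rank one and positive semidefinite. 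Your route is self-contained and elementary, replacing a nontrivial piece of the inverse M-matrix literature with a two-line structural argument; it also makes transparent \emph{why} the lemma is true in this special case, namely that such matrices are exactly clustering matrices, which is conceptually aligned with how the lemma is used in the paper (the matrices $\SLC_\lambda(\abs{X})$ are clustering matrices). What the paper's citation buys in exchange is brevity and a pointer to the general theory, which covers real-valued ultrametric matrices and is relevant elsewhere in that literature. Both proofs are complete; yours could be substituted without loss.
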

In other words, a symmetric binary matrix with ones along the diagonal is a 
correlation matrix if and only if it is ultrametric. 
Therefore, to maximize the reduction we should minimize the number of nonzeroes among all $B$ that are ultrametric binary matrices with 
ones along the diagonal and that satisfy the dual feasibility criterion 
\Cref{eq:dual_feasibility_single_linkage}:
\begin{equation}
    \label{eq:slc_ultrametric}
    \begin{aligned}
        &\text{minimize}
          &   &
          \sum_{ij} B_{ij}
        \\
        &\text{subject to}
          &   & \text{$B$ is a binary ultrametric matrix, $B_{ii} = 1$, and} \\
        & &   & \abs{X_{ij}} > \lambda \implies B_{ij} = 1 \text{ for all } i,j
          \,.
    \end{aligned}
\end{equation}
This is related to the problem of finding a maximal subdominant ultrametric 
\emph{distance}, which is well-studied in the fields of numerical taxonomy 
\citep{jardine.jardine.ea:structure} and phylogenetics \citep[Theorem 7.2.9]{semple.steel.ea:phylogenetics}. The solution is given by single-linkage 
clustering which can be interpreted as producing both an ultrametric distance 
\citep{johnson:hierarchical} and a binary ultrametric matrix---the clustering matrix. The latter point of view will be established below. First, let 
us define single-linkage in a more convenient way. For a symmetric matrix 
$W$ and $\tau \in \Real$, let
\begin{equation*}
  [\SLC_\tau(W)]_{ij}
  \coloneqq
  \begin{cases}
    1 & \text{if $i = j$ or $\max_P \min_{uv \in P} W_{uv} > \tau$, and} \\
    0 & \text{otherwise,}
  \end{cases}
\end{equation*}
where the maximum is taken over all paths between $i$ and $j$ in the complete 
undirected graph on $[n]$. This is equivalent to the procedure described 
in \Cref{sec:example}.  To see this, the maxi-min criterion puts $i$ and $j$ 
in the same cluster if and only if there exists a sequence of links between 
$i$ and $j$ with weights $\abs{X_{ij}}$ all larger than $\tau$.  So the pair 
are connected by single links.
\begin{proposition}
\label{eq:slc_minimal_ultrametric}
$\SLC_\lambda(\abs{X})$ is the unique solution of \Cref{eq:slc_ultrametric}.
\end{proposition}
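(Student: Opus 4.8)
The plan is to reformulate the feasible set of \Cref{eq:slc_ultrametric} in terms of partitions of $[p]$, reduce the objective to a quantity that strictly increases under merging, and then identify the optimal partition with the connected components of the thresholded graph, which is exactly what $\SLC_\lambda(\abs{X})$ encodes. The classical characterization of single-linkage as the maximal subdominant ultrametric \citep[Theorem 7.2.9]{semple.steel.ea:phylogenetics} would give the result too, but a direct combinatorial argument is cleaner and self-contained.

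First I would set up a dictionary between feasible matrices and partitions. Given a symmetric binary $B$ with $B_{ii}=1$, define $i \sim_B j \iff B_{ij}=1$; this is reflexive and symmetric by construction. The ultrametric inequality applied to the pair $(i,k)$ with third index $j$ gives $B_{ik} \geq \min(B_{ij}, B_{jk})$, which forces transitivity since the entries are binary ($B_{ij}=B_{jk}=1$ yields $B_{ik}\geq 1$, hence $B_{ik}=1$). Thus every feasible $B$ is the indicator matrix of an equivalence relation, i.e. of a partition $\pi_B$ of $[p]$, and conversely every partition gives a feasible binary ultrametric matrix with ones on the diagonal. Under this correspondence the dual feasibility constraint $\abs{X_{ij}} > \lambda \implies B_{ij}=1$ says precisely that every edge of the graph $G_\lambda = \set{ij \given \abs{X_{ij}} > \lambda}$ lies inside a single block; equivalently, each connected component of $G_\lambda$ is contained in one block of $\pi_B$. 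Letting $\pi_*$ denote the partition of $[p]$ into connected components of $G_\lambda$, the feasible partitions are exactly those that are coarser than or equal to $\pi_*$.

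Next I would rewrite the objective as $\sum_{ij} B_{ij} = \sum_{S \in \pi_B} \abs{S}^2$ and argue that among all partitions coarser than $\pi_*$ this is uniquely minimized by $\pi_*$ itself. Merging two blocks $S_1, S_2$ turns the contribution $\abs{S_1}^2 + \abs{S_2}^2$ into $(\abs{S_1}+\abs{S_2})^2$, a strict increase by $2\abs{S_1}\abs{S_2} > 0$. Since every partition strictly coarser than $\pi_*$ is obtained from $\pi_*$ by a sequence of such merges, each merge strictly raises the objective, so $\pi_*$ is the unique minimizer.

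Finally I would identify $\pi_*$ with $\SLC_\lambda(\abs{X})$. By the max--min path definition, $[\SLC_\lambda(\abs{X})]_{ij}=1$ iff $i=j$ or there is a path from $i$ to $j$ every edge $uv$ of which satisfies $\abs{X_{uv}} > \lambda$, i.e. iff $i$ and $j$ lie in the same connected component of $G_\lambda$. Hence $\SLC_\lambda(\abs{X})$ is exactly the indicator matrix of $\pi_*$, and its feasibility (binary, symmetric, diagonal ones, ultrametric, and dual-feasible) is immediate from the partition correspondence. Combining the three steps yields that $\SLC_\lambda(\abs{X})$ is the unique solution of \Cref{eq:slc_ultrametric}. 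The main obstacle is the bookkeeping of the first step: carefully verifying that the binary ultrametric constraint is equivalent to being the indicator of a partition, and that dual feasibility translates cleanly into the connected-component condition. Once the feasible set is understood as the collection of partitions coarser than $\pi_*$, the optimization is routine.
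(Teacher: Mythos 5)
Your proof is correct, and it organizes the argument genuinely differently from the paper. The paper works entry by entry: it verifies that $Y = \SLC_\lambda(\abs{X})$ satisfies the ultrametric inequality by contradiction (two admissible paths concatenate into one), and then, for any feasible $U$ and any pair with $Y_{ij}=1$, takes an admissible path joining $i$ and $j$ and iterates the ultrametric inequality along that path to force $U_{ij}=1$; thus $U \geq Y$ entrywise, so $\sum_{ij} U_{ij} \geq \sum_{ij} Y_{ij}$ with equality only if $U = Y$. You instead prove a structural lemma up front: for symmetric binary matrices with unit diagonal, the ultrametric inequality is precisely transitivity, so the feasible set of \Cref{eq:slc_ultrametric} is exactly the set of indicator matrices of partitions of $[p]$ that are coarser than the connected-component partition $\pi_*$ of the thresholded graph, and the objective becomes $\sum_{S}\abs{S}^2$, which strictly increases under every merge of two blocks. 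Your transitivity lemma is the ``global'' form of the paper's iterated inequality (one application per triple rather than one pass along each path), and your merging argument replaces the paper's entrywise comparison in the uniqueness step. What your route buys is transparency and reusability: the feasible set is identified inside the partition lattice, uniqueness is immediate, and the same argument shows the minimizer is unchanged for any objective strictly increasing under merges. What the paper's route buys is economy: it needs no partition dictionary and directly exhibits the entrywise domination $Y \leq U$ over all feasible $U$, a statement your correspondence also yields but only implicitly.
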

\begin{proof}
Let $Y = \SLC_{\lambda}(\abs{X})$. Clearly, $Y$ is dual feasible, symmetric 
and binary. 
To establish that $Y$ is a binary ultrametric, we only need to check the 
ultrametric inequality. Say that a path is admissible if the weights 
$\abs{X_{ij}}$ of the edges along the path are all strictly larger than 
$\lambda$.
Suppose that the ultrametric inequality is violated for a 
triplet $i,j,k$. Then $Y_{ij} = 0$ and $Y_{ik} = Y_{jk} = 1$. So   
there are admissible paths from $i$ to $k$ and from $j$ to $k$
and hence there is an admissible path from $i$ to $j$.  This contradicts the assumption that $Y_{ij} = 0$. So $\SLC_{\lambda}(\abs{X})$ must be an 
ultrametric matrix.

Next, let $U$ be any other binary ultrametric matrix
satisfying the constraints of \Cref{eq:slc_ultrametric} and suppose that 
there is $i,j$ such that $U_{ij} = 0$, 
but $Y_{ij} = 1$. If this is the case, then there 
must be an admissible path between $i$ and $j$, 
say $i = i_1,i_2,\ldots,i_m = j$. 
Those corresponding entries of $U$ must be $1$ (by the constraints of \Cref{eq:slc_ultrametric}) and if $U_{ij} = 0$, then 
by repeatedly applying the ultrametric inequality, 
\begin{align*}
  0 &= U_{ij} \\
  &\geq \min(U_{i_1i_2}, U_{i_2i_m}) \\
  &\geq \min(U_{i_1i_2}, U_{i_2, i_3}, U_{i_3i_m}) \\
  &\vdots \\
  &\geq \min( U_{i_1i_2}, \ldots, U_{i_{m-1} i_m}) = 1
  \,,
\end{align*}
which is a contradiction. So $U_{ij} = 1$ whenever $Y_{ij} = 1$, and hence 
\begin{equation*}
  \sum_{ij} U_{ij} \geq \sum_{ij} Y_{ij} 
  \,.
\end{equation*}
If equality is attained then we must have that $U = Y$.
\end{proof}
Now let
\begin{equation*}
  \SLT_\tau(W) = \SLC_\tau(\abs{W}) \circ W \,.
\end{equation*}
This is the single-linkage thresholding operator and we have shown in the 
above discussion that it is computationally sufficient. Thus, the phenomenon 
illustrated in \Cref{sec:example} is explained completely by the following 
theorem.
\begin{theorem}
Let $\mathcal{M}$ be a collection of expofam-type estimators on 
$X \in \Sym_p$ with 
generators $A$ that are invariant under conjugation by a diagonal matrix 
and suppose that their penalties are $h_C = \lambda \norm{}_1$. Then 
$\SLT_\lambda(X)$ is computationally sufficient for $\mathcal{M}$, 
and moreover every $T \in \mathcal{M}$ satisfies 
\begin{equation*}
  \SLC_\lambda(\abs{X}) \circ T(X) \subseteq T(X) \,.
\end{equation*}
\end{theorem}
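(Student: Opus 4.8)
The plan is to instantiate \Cref{cor:csr} with the single orthogonal projection $Q_X U = \SLC_\lambda(\abs{X}) \circ U$ and then read off both conclusions directly. Writing $B = \SLC_\lambda(\abs{X})$, the entire argument reduces to verifying that this choice of $B$ makes $Q_X$ satisfy the three hypotheses of \Cref{thm:projection_reduction}---averaging, dual feasibility, and dual invariance---for every input $X \in \Sym_p$.

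First I would confirm that $Q_X$ is an orthogonal projection. The map $U \mapsto B \circ U$ is linear and self-adjoint, and it is idempotent if and only if $B$ is $\{0,1\}$-valued, since $\bigl(B \circ (B \circ U)\bigr)_{ij} = B_{ij}^2 U_{ij}$; as $B = \SLC_\lambda(\abs{X})$ is binary, $Q_X$ is indeed an orthogonal projection. The substantive step is averaging, where the preceding lemmas do their work: by \Cref{lem:cut_orbitope}, $Q_X U = B \circ U \preceq_\Group U$ holds for all $U$ precisely when $B \in \Cut_p$. By \Cref{eq:slc_minimal_ultrametric}, $B$ is a binary ultrametric matrix with unit diagonal; by \Cref{lem:ultrametric_psd} it is then positive semidefinite, hence a binary correlation matrix; and by the consequence of \Cref{pro:arcsin_cut}, every binary correlation matrix lies in $\Cut_p$. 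Thus $B \in \Cut_p$ and averaging holds.

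For dual feasibility I would recall that with $C = \{Z \in \Sym_p : \abs{Z_{ij}} \leq \lambda\}$ the condition $Q_X(X - C) \subseteq X - C$ is exactly \Cref{eq:dual_feasibility_single_linkage}, namely $\abs{X_{ij}} > \lambda \implies B_{ij} = 1$, which holds by the very definition of $\SLC_\lambda(\abs{X})$. Dual invariance is then free: $C$ is invariant under conjugation by diagonal sign matrices, so \Cref{lem:dual_invariance_easy} shows it is implied by averaging. With all three hypotheses verified, \Cref{cor:csr} gives that $R(X) = Q_X X = B \circ X = \SLT_\lambda(X)$ is computationally sufficient for $\mathcal{M}$, while \Cref{thm:projection_reduction} gives $Q_X T(X) = \SLC_\lambda(\abs{X}) \circ T(X) \subseteq T(X)$ for every $T \in \mathcal{M}$, which is the second assertion.

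The main obstacle is not any individual verification but the accumulation of structural facts that certify averaging: one must know that single-linkage returns a matrix that is simultaneously binary, ultrametric, and dual-feasible, and that these properties together force membership in the cut polytope. Everything downstream is a mechanical appeal to \Cref{thm:projection_reduction} and \Cref{cor:csr}; the real content lives in the chain of results \Cref{eq:slc_minimal_ultrametric}, \Cref{lem:ultrametric_psd}, and \Cref{pro:arcsin_cut} that identifies $\SLC_\lambda(\abs{X})$ as the right projection.
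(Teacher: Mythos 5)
Your proposal is correct and follows essentially the same route as the paper: the paper's own proof consists of exactly the chain you assembled---\Cref{lem:cut_orbitope}, \Cref{pro:arcsin_cut}, \Cref{lem:ultrametric_psd}, and \Cref{eq:slc_minimal_ultrametric} to certify that $Q_X U = \SLC_\lambda(\abs{X}) \circ U$ satisfies averaging, dual feasibility, and dual invariance, followed by an appeal to \Cref{cor:csr} for sufficiency and to the parameter-reduction result for the ``moreover'' inclusion. The only cosmetic difference is that the paper cites \Cref{thm:parameter_reduction} for the second assertion while you cite \Cref{thm:projection_reduction}, which is immaterial since you verified the stronger hypotheses anyway.
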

The ``moreover'' part of the theorem is \Cref{thm:parameter_reduction}. 
The rest follows from our preceding discussion and \Cref{cor:csr}. 
We have thus far not been able to determine whether or not $\SLT_\lambda(X)$ 
is computationally minimality.  The only result towards the direction of 
minimality is \Cref{eq:slc_minimal_ultrametric}.

\subsection{Single-linkage and positivity constraints}
There is one more connection between symmetric matrix estimation and 
single-linkage clustering that we can point out.  
\citet{lauritzen.uhler.ea:maximum} studied maximum likelihood 
estimation of the inverse covariance matrix of a multivariate Gaussian 
distribution under a positivity restriction on its off-diagonal entries. 
They pointed out numerous connections with single-linkage clustering. Their use of ultrametrics inspired this author to do the same, but the most relevant 
connection to this article is their Proposition 3.6, which essentially 
establishes an exact thresholding phenomenon for their MLE.
Here we try to explain this connection in a more general setting.

We continue the setup from the first part of the section, but replace 
the penalty support set by the cone
\begin{equation*}
  C = \set{Z \in \Sym_p 
    \given Z_{ii} = 0, Z_{ij} \leq 0 \text{ for all } i,j}
  \,.
\end{equation*}
Let $\mathcal{M}$ be a collection of expofam-type estimators on 
$\Sym_p$ with generators invariant to conjugation by diagonal sign 
matrices and penalty set $C$ as above. This induces a positivity constraint on 
the off-diagonal entries of $T \in \mathcal{M}$.
The group and orbitope remain the same as before: diagonal sign matrices and 
cut orbitope. The only difference is that we will need to 
construct some different projections, then re-establish dual feasibility and 
dual invariance. Since the orbitope remains the same, we continue examining 
projections of the form $U \mapsto B \circ U$ for $B$ a binary correlation 
matrix.  Note that the cone $C$ is invariant under this map, so 
dual invariance holds (\Cref{lem:dual_invariance_easy}). This leaves us to 
verify dual feasibility, which for the positivity constraint becomes
\begin{equation*}
  X_{ij} > 0 \implies B_{ij} = 1 \,.
\end{equation*}
Arguing as before, $B$ must be a binary ultrametric metrix with ones along 
its diagonal, so $B = \SLC_0(X)$ is the best possible choice. Thus, 
\begin{equation*}
  \SLT_+(X) \coloneqq \SLC_0(X) \circ X
\end{equation*}
is computationally sufficient for $\mathcal{M}$. 
Moreover, we can also conclude that 
\begin{equation*}
  \SLC_0(X) \circ T(X) \subseteq T(X) \,.
\end{equation*}
So for any $T \in \mathcal{M}$, the set $T(X)$ has elements that are supported 
on $\SLC_0(X)$. 
The positive constrained Gaussian MLE studied by 
\Citeauthor{lauritzen.uhler.ea:maximum} is unique and so there is actually  
equality for that particular $T$ above. 
\section{Discussion}
\label{sec:discussion}
There is much that has been left out and not covered by this article. Here we 
point out some of those things, open problems,  and previews of work that may closely follow.

We have only discussed a relatively small number of examples 
in terms of groups and penalty support sets. However, any single collection 
of expofam-type estimators with generators obeying such group invariances 
must be fairly large and seemingly diverse---the main example in 
\Cref{sec:single_linkage_symmetry} includes PCA and the Ising model in the 
same 
collection. There may also be some criticism about the focus on sparsity. We 
would argue that sparsity or at least \emph{nondifferentiability} of $h_C$ is 
an important contributor to the existence of nontrivial reductions.

There are, however, immediate and important extensions of the examples 
given. For example, the extension of \Cref{sec:single_linkage_symmetry} to the case of 
asymmetric matrix estimators is fairly straightforward, but involved. It has 
implications for methods such as sparse singular value decomposition and 
biclustering. This will be addressed in a follow-up paper.

Linear modeling procedures such as ordinary least squares regression and 
generalized linear models also fit into the expofam-type framework, but it so 
far seems unlikely that considerations of group invariance will be useful in 
obtaining computationally sufficient reductions. There is already a large body 
of literature on so-called ``safe screening rules'' for regression procedures 
such as the Lasso \citep[see, e.g.,][]{el-ghaoui.viallon.ea:safe,tibshirani.bien.ea:strong,liu.zhao.ea:safe,wang.zhou.ea:lasso}. This area is highly relevant, but 
it has focused exclusively on reducing the parameter space rather than the 
input space.

The main example showed a deep connection between single-linkage clustering 
and estimators of symmetric matrices. There is clearly a 
monotonicity phenomenon in the the tuning parameter $\lambda$. This can be 
seen by direct examination, but it is not part of the general 
machinery. This article establishes a fair amount of machinery, and much of it remains to be exploited. The concept of computational minimality is 
appealing, but so far has been elusive to prove. A deep and interesting 
question that is left by \Cref{sec:single_linkage_symmetry} is whether or not 
single-linkage thresholding is computationally necessary. 
\paragraph{Acknowledgments}
This work was supported by the National Science Foundation under Grant No. DMS-1513621. Parts of this research were completed and inspired by visits of the author to different institutions. The author would like to thank Kei Kobayashi and the statistics group in the Mathematics Department at Keio University for their hospitality, conversations, and pointers. The author would also like to thank the Isaac Newton Institute for Mathematical Sciences for its hospitality during the Statical Scalability programme. Thanks also to Jing Lei and Yoonkyung Lee for comments and encouragement.
 
\appendix
\section{More on single-linkage}
\label{sec:single_linkage_extras}
Let $K$ be the complete undirected graph on vertices 
$\set{1,\ldots,p}$ with weights given by $\abs{x_{ij}}$. 
\citet{gower.ross:minimum} showed that the single-linkage dendrogram 
can be recovered from any maximal spanning tree (MST) of $K$---a 
subgraph of maximum weight connecting all of the vertices. 
Indeed, the steps of the single-linkage clustering 
algorithm, as described in \Cref{sec:single_linkage_cluster_analysis}, are 
equivalent to Kruskal's algorithm for finding an MST \citep{kruskal:on}. The algorithm proceeds by maintaining a forest to which it iteratively adds 
edges of maximum weight such that a cycle is not formed. The connected 
components of the intermediate forests correspond to cutting the 
dendrogram at successively smaller values of $\lambda$. Thus, 
\begin{equation}
    \label{eq:single_linkage_equivalence}
    \begin{aligned}
        &\text{$i$ and $j$ are in the same single-linkage cluster at level $\leq\lambda$} \\
        &\iff \text{there is a path from $i$ to $j$ consisting of edges with weights $> \lambda$,}
    \end{aligned}
\end{equation}
and the connected components induced by thresholding $X$ at level $\lambda$ 
correspond exactly to cutting the single-linkage dendrogram at height 
$\lambda$.  Given an MST of $K$, we can reconstruct the 
single-linkage dendrogram from top to bottom by successively removing the 
smallest weight edges from the MST.
 
\section{Additional proofs}
\label{sec:proofs}
\subsection{Proof of Theorem \ref{thm:projection_reduction}}
\begin{proof}
$Q$ is an orthogonal projection so it is self-adjoint and idempotent. 
We will use this fact repeatedly in the proof. 
The dual invariance condition and $\Group$-invariance of $A$ imply that 
\begin{equation}
  \label{eq:middle_link_raw}
  \begin{aligned}  
    A(\theta) - \innerp{Q x}{\theta} + h_C(\theta)
    &= \max_{z \in C} A(\theta) - \innerp{Q x - z}{\theta} \\
    &\geq \max_{z \in C} A(\theta) - \innerp{Q (x - z)}{\theta} \\
    &\geq \max_{z \in C} A(Q \theta) - \innerp{x - z}{Q \theta} \\
    &= A(Q \theta) - \innerp{x}{Q \theta} + h_C(Q \theta)
  \end{aligned}
\end{equation}
for all $\theta$ and hence 
\begin{align}
  \min_\theta
  A(\theta) - \innerp{Q x}{\theta} + h_C(\theta)
  \label{eq:middle_link_1}
  &\geq
  \min_\theta
  A(Q \theta) - \innerp{x}{Q \theta} + h_C(Q \theta) \\
  \label{eq:middle_link_2}
  &\geq
  \min_\theta
  A(\theta) - \innerp{x}{\theta} + h_C(\theta) \,.  
\end{align}
We will use these chains of inequalities for each direction of the proof.
Let $\theta_* \in T(x)$. \Cref{thm:parameter_reduction} guarantees that 
$Q \theta_* \in T(x)$ and so 
\begin{align*}
  \min_\theta A(\theta) - \innerp{x}{\theta} + h_C(\theta)
  &= A(Q \theta_*) - \innerp{x}{Q \theta_*} + h_C(Q \theta_*) \\ 
  &= A(Q \theta_*) - \innerp{Q x}{Q \theta_*} + h_C(Q \theta_*) \\
  &\geq \min_\theta A(\theta) - \innerp{Q x}{\theta} + h_C(\theta) \,.
\end{align*}
Appending \Cref{eq:middle_link_1,eq:middle_link_2} to this chain yields the 
equality, 
\begin{equation*}
  \min_\theta A(\theta) - \innerp{Q x}{\theta} + h_C(\theta)
   = A(Q \theta_*) - \innerp{Q x}{Q \theta_*} + h_C(Q \theta_*) \,,
\end{equation*}
and hence $Q \theta_* \in T(Q x)$. This proves that 
\begin{equation}
  \label{eq:reduction_inclusion_1}
  Q T(x) \subseteq T(Q x) \,. 
\end{equation}
Now let $\theta_* \in T(Q x)$. \Cref{thm:parameter_reduction} implies that 
\begin{align*}
  \min_\theta A(\theta) - \innerp{x}{\theta} + h_C(\theta) 
  &= 
  \min_\theta A(Q \theta) - \innerp{x}{Q \theta} + h_C(Q \theta) \\
  &= 
  \min_\theta A(Q \theta) - \innerp{Q x}{Q \theta} + h_C(Q \theta) \\
  &\geq 
  A(\theta_*) - \innerp{Q x}{\theta_*} + h_C(\theta_*) \,.
\end{align*}
Now we apply \Cref{eq:middle_link_raw} and then \Cref{eq:middle_link_2} to 
conclude that
\begin{equation*}
  \min_\theta A(\theta) - \innerp{x}{\theta} + h_C(\theta) 
  = A(Q \theta_*) - \innerp{x}{Q \theta_*} + h_C(Q \theta_*) 
\end{equation*}
and hence $Q \theta_* \in T(x)$. This proves that $Q T(Q x) \subseteq T(x)$. 
Now apply $Q$ to both sides of \Cref{eq:reduction_inclusion_1} to conclude that
\begin{equation}
  \label{eq:reduction_inclusion_2}
  Q T(x) \subset Q T(Q x) \subseteq T(x) \,.
\end{equation}
Applying $Q$ to both sides above once more yields
\begin{equation*}
  Q T(x) = Q T(Q x) \,.
\end{equation*}
Combining this equality with \Cref{eq:reduction_inclusion_1,eq:reduction_inclusion_2}, we have that 
\begin{equation*}
  Q T(x) = Q T(Q x) \subseteq T(x) \cap T(Q x) \,.
\end{equation*}
If $T(x)$ is a singleton, then clearly $Q T(x) = T(x)$ and $Q T(Q x) = T(Q x)$ 
so
\begin{equation*}
  Q T(x) = Q T(Q x) = T(x) = T(Q x) 
  \,.
  \qedhere
\end{equation*}
\end{proof}

\subsection{Proof of Lemma \ref{lem:dual_invariance_easy}}
\begin{proof}
Let $z \in C$. Suppose that $QC \subset C$. Then
\begin{equation*}
    Q (x - z) = Q x - Q z \in Q x - C \,.
\end{equation*}
So dual invariance holds. Now suppose that $C$ is $\Group$-invariant. 
Averaging implies that $Q z \preceq_{\Group} z$ and so
\begin{equation*}
    Q z \in \conv(\Group \cdot z) \subseteq C \,.
\end{equation*}
Then $Q C \subseteq C$ and dual invariance holds.
\end{proof}

\subsection{Proof of Lemma \ref{lem:cut_orbitope}}
\begin{proof}
Since both $\conv(G\cdot u)$ and 
$\set{I \circ B \given B \in \Cut_p}$ are closed
convex sets, it is enough to show that they have the same support function.
Let $d$ be the vector of diagonal entries of $D$. Then
$g \cdot U = U \circ d d^T$. So for any $Z \in \Sym_p$,
\begin{align*}
  \max_{g \in G} \innerp{g \cdot U}{Z}
  &= \max_{d \in \set{-1,+1}^m} \innerp{U \circ d d^T}{Z} \\
  &= \max_{d \in \set{-1,+1}^m} \innerp{U \circ Z}{d d^T} \\
  &= \max_{B \in \Cut_m} \innerp{U \circ Z}{B} \\
  &= \max_{B \in \Cut_m} \innerp{U \circ B}{Z} \,.
\end{align*}
Above we have used the fact that the support function of a set is the same as
that of its closed convex hull 
\citep[Proposition C.2.2.1]{HUL:fundamentals}. Thus,
$\conv(G \cdot U) = \set{U \circ B \given B \in \Cut_m}$.
\end{proof}

\subsection{Proof of Proposition \ref{pro:arcsin_cut}}
\begin{proof}
Since $\sin(\pi/2) = - \sin(-\pi/2) = 1$, it follows that $K$ contains the rank-$1$ correlation matrices $\set{x x^T \given x \in \set{-1,+1}^p}$.  Therefore,
\begin{equation*}
  \Cut_p = \conv\set{x x^T \given x \in \set{-1, +1}^p} \subseteq \conv(K) \,.
\end{equation*}
In order to show the reverse conclusion, it is enough for us to show that 
$K \subseteq \Cut_p$.
Let $Z$ and $Y$ be i.i.d. Gaussian random vectors with correlation matrix 
$\Sigma$.  For $i,j \in \bracks{p}$, it is well-known that
\begin{align*}
  &
    \E \braces{\sign(Z_i - Y_i) \sign(Z_j - Y_j)} \\
  &\quad =
    \Pr\braces{(Z_i - Y_i) (Z_j - Y_j) > 0 }
    - \Pr\braces{(Z_i - Y_i) (Z_j - Y_j) < 0 }
  \\
  &\quad =
    \frac{2}{\pi} \arcsin(\Sigma_{ij})
\end{align*}
\citep[see, e.g.,][p.~827]{kruskal:ordinal}.
Since
\begin{equation*}
  \sign(Z - Y)\sign(Z - Y)^T
  \in
  \set[\big]{ y y^T \given y \in \set{-1, +1}^p }
\end{equation*}
almost surely, it follows that
\begin{align*}
    \frac{2}{\pi} \arcsin[ \Sigma ]
    &= \E\braces{ \sign(Z - Y)\sign(Z - Y)^T } \\
    &\in \conv\parens[\big]{\set[\big]{ y y^T \given y \in \set{-1, +1}^p }}
    \\
    &= \Cut_p
    \,.\qedhere
\end{align*}
\end{proof}

\subsection{Proof of Lemma \ref{lem:ultrametric_psd}}
\begin{proof}
Suppose that the ultrametric inequality were violated so that 
\begin{equation*}
  U_{ik} < \min(U_{ij}, U_{jk})
\end{equation*}
for some $i,j,k$. Then $U_{ik} = 0$ and $U_{ij} = U_{jk} = 1$ and 
the corresponding principal submatrix 
\begin{equation*}
  \begin{bmatrix}
    1 & 1 & 0 \\
    1 & 1 & 1 \\
    0 & 1 & 1
  \end{bmatrix}
\end{equation*}
is indeterminate, so $U$ cannot be positive semidefinite. Conversely, 
suppose that the ultrametric inequality is satisfied. Then $B$ is  
ultrametric and hence positive semidefinite 
\citep[Theorem 3.5]{dellacherie.martinez.ea:inverse}.
\end{proof}
  
\printbibliography
\end{document}